\newtheorem{lemm}{Lemma}[section]
\newtheorem{prop}[lemm]{Proposition}
\newtheorem{coro}[lemm]{Corollary}
\newtheorem{rema}[lemm]{Remark}
\newtheorem{theo}{Theorem}
\newcommand{\im}{\mathbf{i}}
\newcommand{\Z}[1]{\mathbb{Z}/#1\mathbb{Z}}
\newcommand{\C}{\mathbb{C}}
\newcommand{\Bir}{\mathrm{Bir}}
\newcommand{\Aut}{\mathrm{Aut}}
\newcommand{\Sym}{\mathrm{Sym}}
\newcommand{\Pic}[1]{\mathrm{Pic}(#1)}
\newcommand{\A}{\mathbb{A}}
\newcommand{\p}{\mathbb{P}}
\newcommand{\z}{\mathbb{Z}}
\newcommand{\SL}{\mathrm{SL}}
\title{Non-rationality of some fibrations associated to Klein surfaces}
\author{J\'er\'emy Blanc}
\begin{document}
\begin{abstract}
We study the polynomial fibration induced by the equation of the Klein surfaces obtained as quotient of finite linear groups of automorphisms of the plane; this surfaces are of type A, D, E, corresponding to their singularities.

The generic fibre of the polynomial fibration is a surface defined over the function field of the line. We  proved that it is not rational in cases D, E, although it is obviously rational in the case A.

The group of automorphisms of the Klein surfaces is also described, and is linear and of finite dimension in cases D, E; this result being  obviously false  in case A.

2010 \textit{Mathematics Subject Classification:} 14E08, 20F55, 17B45, 14B07, 14E05, 14R20
\end{abstract}
\thanks{The author aknowledge support from the Swiss National Science Foundation grant no PP00P2\_128422/1.}
\maketitle
\section{Introduction}
Taking any finite subgroup $G\subset \SL(2,\C)$ acting on $\C^2$, the quotient is a surface in $\C^3$ given by one of the following polynomials:
$$\begin{array}{rcll}
a_n(x,y,z)&=&x^n-yz,& n\ge 2\\
 d_n(x,y,z)&=&x^{n-1}+xy^2+z^2, & n\ge 4\\
e_6(x,y,z)&=&x^4+y^3+z^2,\\
 e_7(x,y,z)&=& x^3y+y^3+z^2,\\
 e_8(x,y,z)&=&x^5+y^3+z^2.
\end{array}
$$
The surfaces obtained here are often called \emph{Klein surfaces}, which have one singularity at the origin, of type $A,D,E$, and all such singularities are obtained by this process. There is a classical relation between the Dynkin diagram obtain from the resolution of singularities and the one given by the representation theory of $G$ (see for example~\cite{bib:Reid} and its references).\\

If $f\in \C[x,y,z]$ is any of the polynomials above, the map $F\colon \C^3\to \C$ given by $(x,y,z)\mapsto f(x,y,z)$ is a morphism of algebraic varieties. The fibre $f_t$ of $t\not=0$ is a smooth  hypersurface of $\C^3$,  whereas the fibre $f_0$ of $0$ is the quotient surface described before. One can then see $F$ as a deformation of the singularity.

In this article, we are interested in the question of the rationality of the generic fibre of $F$, asked to us by J. Alev. Even if the general fibres of $F$ is rational,  the generic fibre is not rational, except in the trivial case of $a_n$.
\begin{theo}\label{Thm:NonRat}
Let $f\in \C[x,y,z]$ be equal to one of the polynomials $a_n,d_n,e_6,e_7$ or $e_8$ above. The following are equivalent:
\begin{enumerate}
\item
The field extension $\C(x,y,z)/\C(f)$ is rational, i.e. there exists exist $g,h\in \C(x,y,z)$ such that $\C(x,y,z)=\C(f,g,h)$;
\item
The polynomial $f$ is equal to $a_n$ $($for some $n\ge 2)$.
\end{enumerate}
\end{theo}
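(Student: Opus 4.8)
\noindent\emph{The implication $(2)\Rightarrow(1)$.} If $f=a_n=x^n-yz$, then, since $y\neq 0$ in $\C(x,y,z)$, we may write $z=(x^n-f)/y$, so that $\C(x,y,z)=\C(f,x,y)$; and $f=x^n-yz$ is transcendental over $\C(x,y)$ (else $z$ would be algebraic over $\C(x,y)$), so this is a purely transcendental extension of $\C(f)$ of transcendence degree $2$. Hence $(1)$ holds with $g=x$, $h=y$.

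\smallskip
\noindent\emph{Reduction for $(1)\Rightarrow(2)$.} Assume now $f\in\{d_n,e_6,e_7,e_8\}$; write $K=\C(f)$, let $S$ be the generic fibre of $F$ — a smooth affine surface over $K$ — and fix a smooth projective model $X/K$, spread out over a dense open $U$ of the $t$-line as a smooth projective family. Each complex fibre over a point of $U$ is a smooth projective model of the rational surface $F^{-1}(c)$, $c\in\C^\ast$ (recalled in the introduction), hence has vanishing plurigenera and irregularity; as these are semicontinuous, resp.\ locally constant, in the family, the geometric generic fibre $X_{\overline K}$ has them too, so it is a smooth projective rational surface by Castelnuovo. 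Therefore $\mathrm{Pic}(X_{\overline K})$ is a finitely generated free $\z$-module on which $\Gal(\overline K/K)$ acts through a finite group, preserving $K_X$ and the intersection form. Now $H^1\!\big(\Gal(\overline K/K),\mathrm{Pic}(X_{\overline K})\big)$ is a $K$-birational invariant of smooth projective varieties that vanishes for $\p^2$, hence for any $K$-rational variety; and by the work of Manin and Iskovskikh, if $X$ is $K$-rational then moreover the model obtained from $X$ by the minimal model program over $K$ is $\p^2$, a quadric surface, or a del Pezzo surface or conic bundle with $K^2\geq 5$. It therefore suffices to exhibit either a nonzero class in $H^1\!\big(\Gal(\overline K/K),\mathrm{Pic}(X_{\overline K})\big)$, or a $K$-minimal model that is a del Pezzo surface of degree $\leq 4$ (or a conic bundle with $\geq 4$ degenerate geometric fibres).

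\smallskip
\noindent\emph{The computation.} The key input is that $\Gal(\overline K/K)$ acts on $\mathrm{Pic}(X_{\overline K})$ through the monodromy of the one-parameter family $F$, which on the sublattice of vanishing cycles of the singularity — a root lattice of type $D_n$, $E_6$, $E_7$ or $E_8$ — is a Coxeter element of the corresponding Weyl group, and which permutes the remaining classes (those of the curves at infinity of the model) in a way one writes down explicitly. For $f=d_n$ this is most concrete: the projection $(x,y,z)\mapsto x$ exhibits $S$ as a conic bundle over $\p^1_K$ with conic $z^2+xy^2=(t-x^{n-1})w^2$; a relatively minimal model has degenerate fibres over $x=0$, over the $n-1$ points with $x^{n-1}=t$, and — when $n$ is odd — over $x=\infty$, hence at least $n\geq 4$ of them. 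Here $\Gal(\overline K/K)$ acts on the configuration through the cyclic group $\Gal\!\big(\C(t^{1/2(n-1)})/K\big)$ of order $2(n-1)$: it permutes the $n-1$ points $x^{n-1}=t$ transitively and exchanges the two components of the fibre over $x=0$ (since $-t\notin(K^\ast)^2$) and of each fibre over $x^{n-1}=t$. As no degenerate fibre splits over its residue field the conic bundle cannot be simplified, and computing $\mathrm{Pic}(X_{\overline K})$ as a module under this action gives $H^1\neq 0$: for instance the $\Gal$-stable rank-$(n+1)$ sublattice spanned by the fibre class and one component of each degenerate fibre over $x=0$ and over $x^{n-1}=t$ already has first cohomology $\z/2\z$. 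For $f=e_7$ one argues the same way with the double-plane model $z^2=t-x^3y-y^3$, which (after resolving $A_1$-singularities at infinity) is a del Pezzo surface of degree $2$; for $f=e_6,e_8$ one resolves the natural singular weighted-projective closure of $\{f=t\}$ and runs the $K$-MMP, ending at a del Pezzo surface of degree $3$, resp.\ $1$. In each del Pezzo case the Coxeter element of $W(E_{9-d})$ has no nonzero fixed vector on $K_X^\perp=E_{9-d}$, so the $K$-minimal model has Picard group $\z K_X$ over $K$ with $K_X^2=d\leq 4$; such a surface is not $K$-rational.

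\smallskip
\noindent\emph{Main obstacle.} The heart of the proof is this last computation — especially for $e_6,e_7,e_8$: choosing a workable smooth projective model, determining its geometric Picard lattice together with all exceptional and fibral curves, and identifying the Galois (equivalently, monodromy) action on those curves, i.e.\ matching the known monodromy of the simple singularity with the explicit configuration of curves at infinity; once this is done, running the minimal model program or computing $H^1$ is routine bookkeeping. In case $a_n$ there is of course no obstruction — the rational parametrisation of the first paragraph witnesses $(1)$ — and so $(1)\Leftrightarrow(2)$.
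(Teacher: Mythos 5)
Your overall strategy coincides with the paper's: compactify the generic fibre over $K=\C(t)$ as a del Pezzo surface of degree $3,2,1$ (for $e_6,e_7,e_8$) or as a conic bundle (for $d_n$), prove minimality over $K$, and conclude by the Iskovskikh--Manin classification of elementary links. Your treatment of $d_n$ is essentially the paper's argument and is essentially complete: the fibres over $x=0$ and over the closed point $x^{n-1}=t$ have their two geometric components exchanged by $\mathrm{Gal}(\overline{K}/K)$ (via $\sqrt{t}\mapsto-\sqrt{t}$, resp. $\mu\mapsto-\mu$ for $\mu^{2(n-1)}=t$), so no component orbit is contractible and one gets a minimal conic bundle with at least $n\ge 4$ degenerate fibres. (Your parenthetical $H^1$ alternative is not justified as written: $H^1$ of a Galois-stable sublattice of $\mathrm{Pic}(X_{\overline K})$ controls neither $H^1$ of the full lattice nor rationality, and the claimed $\z/2\z$ for the rank-$(n+1)$ sublattice is not verified --- but this remark is redundant for your argument.)

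The genuine gap is in the cases $e_6,e_7,e_8$, and you have named it yourself: everything rests on the assertion that $\mathrm{Gal}(\overline{K}/K)$ acts on $K_X^{\perp}\cong E_{9-d}$ through a group containing a Coxeter element, whence no nonzero invariants and minimality of $X$. This is precisely the statement that needs proof, and it is not routine: one must (i) identify the vanishing lattice of the quasi-homogeneous singularity with $K_X^{\perp}$ inside the chosen compactification, which requires controlling the classes of the boundary curves at infinity, and (ii) prove that the Galois image (equivalently, the monodromy of the family over $\C^{*}$) realizes the Coxeter element on that lattice. Deferring this as ``the main obstacle'' leaves the theorem unproved for the three $E$ cases. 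The paper closes exactly this gap by brute force: it computes all $(-1)$-curves explicitly (the $27$ lines of the cubic, the $56$ curves over the $28$ bitangents of the quartic, the $240$ exceptional curves of the degree-one del Pezzo), observes that they are defined over the Kummer extensions $\C(t^{1/12})$, $\C(t^{1/18})$, $\C(t^{1/30})$, and checks that Galois-conjugate curves meet, so that no orbit is contractible. Your Coxeter-element route, if completed --- say by quoting the classical computation of the monodromy of simple singularities and carefully matching lattices --- would be an elegant and less computational alternative (and would explain a priori why the Coxeter numbers $12,18,30$ govern Theorem~\ref{Thm:NonRatDegre}), but as it stands the heart of the argument is missing.
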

\begin{rema}
The implication $(2)\Rightarrow (1)$ is easy, by taking $g=x$ and $h=y$ for instance. Although the case $A_n$ is trivial for the result of the theorem, we will apply our strategy of proof to it, in order to see its difference with the other cases from the geometric point of view.
\end{rema}

To prove Theorem \ref{Thm:NonRat}, we will provide a natural compactification of  the generic fibre of the the polynomial map $F\colon \C^3\to \C$, obtaining a minimal surface defined over $\C(t)$, whose root system is exactly of type $A_n$, $D_n$, $E_6$, $E_7$ or $E_8$. The surface, minimal over $\C(t)$, will be minimal over $\overline{\C(t)}$ only in the case $A_n$, and will be rational only in this case too. The technique only involves simple classical tools of birational geometry, with some little tricks. We will moreover find the minimal extension of $\C(t)$ which makes the extension rational:
\begin{theo}\label{Thm:NonRatDegre}
Let $f\in \C[x,y,z]$ be equal to one of the polynomials $d_n,e_6,e_7$ or $e_8$ above. 
There exists a number $a\in \mathbb{N}$ depending only on $f$ such that: 

For any field extension $K$ of $\C(f)$, the extension $K(x,y,z)/K$ is rational if and only if $K$ contains a $a$-th root of $f$.

Moreover, $a$ is equal to $12$, $18$, $30$ if $f=e_6$, $e_7$, $e_8$ and is the highest power of $2$ that divides $2(n-1)$ if $f=d_n$.
\end{theo}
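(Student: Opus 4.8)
The plan is to build on the surface produced in the proof of Theorem~\ref{Thm:NonRat}. Write $k=\C(f)=\C(t)$ and let $S$ be the smooth projective $k$-surface constructed there: a $k$-minimal, geometrically rational compactification of the generic fibre of $F$, with root system of the prescribed ADE type. Thus $S_{\overline k}$ is a del Pezzo surface of degree $3,2,1$ in cases $e_6,e_7,e_8$ (the orthogonal complement of $K_S$ being the root lattice $E_6,E_7,E_8$), while for $d_n$ it is a relatively minimal conic bundle over $\mathbb{P}^1$, given by $(x,y,z)\mapsto x$, with generic conic $z^2+xy^2=t-x^{n-1}$, degenerate over $x=0$, over the $n-1$ points with $x^{n-1}=t$, and possibly over $x=\infty$. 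Two facts are used throughout. First, since $f$ is quasi-homogeneous with isolated singularity, $0$ is the only critical value of $F$, so the local monodromy around $0$ — a Coxeter element $c$ of the corresponding Weyl group, of order the Coxeter number $h$ (equal to $n+1$, $2(n-1)$, $12$, $18$, $30$) — carries the entire Galois action: $\Gal(\overline k/k)$ acts on $\mathrm{Pic}(S_{\overline k})$ through the cyclic group $\Gal(\C(t^{1/h})/\C(t))\cong\langle c\rangle$. Second, for $K\supseteq k$ the image $\Gamma_K$ of $\Gal(\overline K/K)$ in $\langle c\rangle$ is contained in $\langle c^{d}\rangle$ (for $d\mid h$) if and only if $K$ contains a $d$-th root of $f$, because the preimage of $\langle c^{d}\rangle$ in $\Gal(\overline k/k)$ is exactly $\Gal(\overline k/\C(t^{1/d}))$. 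The theorem then amounts to: $S_K$ is $K$-rational if and only if $\Gamma_K\subseteq\langle c^{a}\rangle$, where $a=h$ in cases $e_6,e_7,e_8$ and $a=2^{v_2(2(n-1))}$ in case $d_n$.

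\emph{The ``if'' direction.} By base change it suffices to rationalise $S$ over $\C(t^{1/a})$. For $e_6,e_7,e_8$ the polynomial $f$ is quasi-homogeneous of weighted degree $a=h$ for the weights $(3,4,6)$, $(4,6,9)$, $(6,10,15)$ respectively; substituting $x\mapsto s^{w_x}x$, $y\mapsto s^{w_y}y$, $z\mapsto s^{w_z}z$ with $s^{a}=t$ identifies the generic fibre $\{f=t\}$ with $\{f=1\}\times_{\C}\C(t^{1/a})$, and the smooth fibre $\{f=1\}$ of $F$ is rational over $\C$. For $d_n$ one works with the conic bundle: the generic conic over $k(x)$ has quaternion class $(-x,\,t-x^{n-1})\in\mathrm{Br}(k(x))$, ramified exactly at $x=0$ with residue $[t]$ and at the points $\alpha$ with $\alpha^{n-1}=t$ with residue $[-\alpha]$. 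Over $\C(t^{1/a})$ both residues die: $[t]=[(t^{1/a})^{a}]$ is a square because $a$ is even, and — writing $2(n-1)=ar$ with $r$ odd — each such $\alpha$ becomes a root of unity times a square in its residue field, so $[-\alpha]$ is trivial there. Hence the class is unramified on $\mathbb{P}^1_{\C(t^{1/a})}$, thus trivial by Tsen's theorem, so the conic bundle has a section and $S_{\C(t^{1/a})}$ is rational. This is exactly where the exponent $a=2^{v_2(2(n-1))}$ comes from: killing the residue at $x=0$ demands $\sqrt{f}\in K$, killing the residues at the $\alpha$'s demands precisely the full $2$-power ramification over $0$ and nothing odd — so in case $d_n$ the surface becomes rational \emph{strictly before} the Galois action on $\mathrm{Pic}(S_{\overline k})$ becomes trivial, unlike in the $E$ cases.

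\emph{The ``only if'' direction.} Assume $K(x,y,z)/K$ is rational, so $S_K$ is a $K$-rational smooth projective surface. Then $H^1(\Gal(\overline K/K),\mathrm{Pic}(S_{\overline K}))=0$ (Manin), and by the Iskovskikh--Manin classification the output of the $K$-minimal model program on $S_K$ is $\mathbb{P}^2$, a quadric with a rational point, a del Pezzo surface of degree $\ge 5$, or a conic bundle with $K^2=8$; in particular $S_K$ cannot be a $K$-minimal del Pezzo surface of degree $\le 4$, nor a $K$-minimal conic bundle carrying too many degenerate fibres. I claim this forces $\Gamma_K\subseteq\langle c^{a}\rangle$, whence $f^{1/a}\in K$ by the remark above. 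If not, then $\Gamma_K$ contains $\langle c^{h/p}\rangle$ for some prime $p\mid h/a$ — an arbitrary prime $p\mid h$ in the $E$ cases, and (after the $x=0$ residue has been dealt with) the prime $2$ in the $d_n$ case. One then checks, case by case and prime by prime, that $c^{h/p}$ fixes no sufficiently large set of pairwise skew lines on the del Pezzo surface — respectively, that over the relevant subfield of $\overline k$ the residues of the conic bundle at $x=0$ and at the roots of $x^{n-1}=t$ do not all become squares, so the ramification divisor stays positive and, by a parity count, too large — so that after all admissible blow-downs $S_K$ remains one of the non-rational minimal types above; equivalently that $H^1(\langle c^{h/p}\rangle,\mathrm{Pic}(S_{\overline k}))\ne 0$. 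Feeding in the Weyl-group exponents ($E_6$: $1,4,5,7,8,11$; $E_7$: $1,5,7,9,11,13,17$; $E_8$: $1,7,11,13,17,19,23,29$; $D_n$: the odd numbers up to $2n-3$ together with $n-1$) then shows that this handles every $\Gamma_K\not\subseteq\langle c^{a}\rangle$ and identifies $a$ as stated.

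The main obstacle is this last, combinatorial, step: for each root system and each relevant power of the Coxeter element one must understand its action on the lines of the del Pezzo surface (respectively on the components of the degenerate fibres of the conic bundle) and verify that it never permits descent to a $K$-rational minimal model — equivalently, compute $H^1(\langle c^{k}\rangle,\mathrm{Pic}(S_{\overline k}))$ for all $k\mid h$. The $E_8$ case is the cleanest, since every exponent is prime to $30$, so no nontrivial power of $c$ has a fixed line and $K$-rationality forces the whole action to be killed, giving $a=30$; $E_6$ and $E_7$ require genuine care because some exponents share a factor with $h$; and in case $d_n$ one must carefully separate the $2$-power contribution carried by the fibre over $x=0$ from the ramification over the roots of $x^{n-1}=t$, which — perhaps surprisingly — contributes nothing once one is over $\C(t^{1/a})$. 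Everything else (the construction of $S$, the birational geometry over $\C(t)$, Tsen's theorem, and the Manin--Iskovskikh theory of minimal rational surfaces) is taken as input.
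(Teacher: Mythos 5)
Your strategy is genuinely different from the paper's: where the paper writes down the $27$ lines of $S_6$, the $56$ bitangent-curves of $S_7$, the $240$ anticanonical degree-$2$ curves of $S_8$ and the components of the singular fibres of $U_n$ explicitly, and reads off the splitting fields and intersections from the formulas, you propose to identify the whole Galois action on $\mathrm{Pic}(S_{\overline{\C(t)}})$ with the cyclic group generated by the Coxeter element (via quasi-homogeneous monodromy) and then argue representation-theoretically. Your ``if'' direction is complete and correct: the weighted rescaling with $s^{a}=t$ for $e_6,e_7,e_8$ is exactly the paper's Remark 1.3(2), and the Brauer-residue computation for $d_n$ (residue $t$ at $x=0$, residues $-\alpha$ at $\alpha^{n-1}=t$, all killed over $\C(t^{1/a})$, then Faddeev/Tsen) is a clean substitute for the paper's explicit contraction of the curves $C_1,\dots,C_{a/2}$. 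The $E_8$ sub-case of the ``only if'' direction is also essentially complete as you state it, since the exponents $1,7,\dots,29$ are coprime to $30$, so every nontrivial power of $c$ has invariant Picard rank $1$ and minimality is automatic.

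The genuine gap is the rest of the ``only if'' direction, which is where the actual values $12$, $18$ and $2^{v_2(2(n-1))}$ get certified. You reduce correctly to showing, for each prime $p\mid h$ (resp.\ $p=2$ beyond the relevant $2$-power for $d_n$), that the order-$p$ subgroup $\langle c^{h/p}\rangle$ admits no invariant contractible set of $(-1)$-curves; but you then write ``one then checks, case by case'' and later concede that this is ``the main obstacle.'' That check is not a formality: for $E_6$ the element $c^{6}$ fixes a $2$-dimensional subspace of the root lattice (exponents $4$ and $8$ are even), so the invariant Picard rank is $3$ and one must actually examine orbits of lines — this is precisely what the paper does by verifying that $L_\mu$ and $L_{-\mu}$ meet and that $L_1,L_2,L_3$ pairwise meet at $(0{:}1{:}0{:}0)$; similarly for $E_7$ and for the splitting behaviour of the fibres of $U_n$. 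Your proposed shortcut ``equivalently, $H^1(\langle c^{h/p}\rangle,\mathrm{Pic})\ne 0$'' is also not an equivalence: vanishing of $H^1$ is necessary but not sufficient for rationality, so even a full computation of these cohomology groups would not by itself close the argument, and you would be thrown back on the minimality analysis in any case. A secondary, smaller gap: the identification of the image of $\Gal(\overline{\C(t)}/\C(t))$ in $W$ with $\langle c\rangle$, $c$ a Coxeter element of order exactly $h$, is asserted via local monodromy at $0$ but not proved for the chosen projective models (one must rule out extra contributions from the boundary and identify $K_S^{\perp}$ with the Milnor lattice); the paper sidesteps this entirely because the explicit equations of the $(-1)$-curves exhibit the splitting field directly. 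As it stands, the proposal is a plausible and attractive programme, but the computation that pins down $a$ is missing.
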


\begin{rema}
$1)$ The dimensions of the minimal field extensions above are the Coxeter number of the Weyl groups $E_6,E_7,E_8$. For $D_n$, the Coxeter number is $2(n-1)$ and here we only find the highest power of $2$ that divides this number.

$2)$ The fact that the extraction of roots $t$ give rational surfaces can be checked directly by looking at the equations, and doing change of variables of the type $x=u^aX$, $y=u^bY$, $z=u^c Z$, $t=u^d$. The $d$ needed so that the equation get rid of $u,t$ is exactly the one given by the Theorem~$\ref{Thm:NonRatDegre}$.
\end{rema}

This result is the same as the result on the Gelfand-Kirillov conjecture for
simply laced simple Lie algebras established by A. Premet \cite{bib:Premet}: among the
ADE types, it is true if and only if we are in type A. This was the origin of
J. Alev's question and might be the starting point of an alternative proof of
Premet's result via the structure of transverse slices. Theorems~\ref{Thm:NonRat} and~\ref{Thm:NonRatDegre} could also yield a way to prove that the Gelfand-Kirillov conjecture does not hold for finite W-algebras of type $D$, $E$ (although it is true in type $A$, as proved in \cite{bib:FMO}).

Note that the question is also related to the question of rational variables of $\C^3$. A rational function $f\in \C(x,y,z)$ is a rational variable if there exists $g,h\in \C(x,y,z)$ such that $\C(f,g,h)=\C(x,y,z)$. The notion of variable is related to the understanding of the Cremona group $\Aut_\C(\C(x,y,z))=\Bir(\C^3)$. A necessary condition to be a variable is that the general fibres of $f\colon \C^3\dasharrow \C$ are rational. Theorem~\ref{Thm:NonRat} shows that this condition is not sufficient.
\\

Another question related to the Klein surfaces corresponds to describing the automorphisms group of these surfaces viewed as affine algebraic surfaces. 

The group of automorphisms of the surface of type $a_n$ has obviously infinite dimension, since it contains the group
$$\left.\left\{(x,y,z)\mapsto \left(x+yP(y),y,z+\frac{(x+yP(y))^n-x^n}{y}\right)\ \right|\ P\in \C[y]\right\}.$$
 It is in fact an amalgamated free product \cite[Theorem 5.4.5]{BD}.

In all other cases, we will prove that only linear automorphisms are possible (see Corollaries~\ref{Coro:KleinAuto} and \ref{Coro:KleinAutoDn}), and obtain the following result:
\begin{theo}
Let $f\in \C[x,y,z]$ be equal to one of the polynomials $a_n,d_n,e_6,e_7$ or $e_8$ above. The following are equivalent:
\begin{enumerate}
\item
Every automorphism of the Klein surface
$$\begin{array}{rcl}
S_{f}&=&\{(x,y,z)\in \C^3\ |\ f(x,y,z)=0 \}\\
\end{array}$$
extends to a linear automorphism of $\C^3$;
\item
The group of automorphisms of the Klein surface $S_{f}$ has finite dimension;
\item
The polynomial $f$ is  equal to $e_6,e_7,e_8$ or $d_n$ $($for some $n\ge 4)$.
\end{enumerate}

Each of the automorphisms of $S_{f}$ is moreover diagonal if $f=e_6,e_7,e_8$ or $d_n$ for $n\ge 5$.
Moreover, $\Aut(S_{e_6})\cong \C^{*}\times \{\pm 1\}$, $\Aut(S_{e_7})\cong \Aut(S_{e_8})\cong \C^{*}$, $\Aut(S_{d_n})\cong \C^{*}\times \Z{2}$ for $n\ge 5$ and $\Aut(S_{d_4})=\C^{*}\rtimes \Sym_3$.
\end{theo}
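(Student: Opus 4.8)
The plan is to establish the cycle $(1)\Rightarrow(2)\Rightarrow(3)\Rightarrow(1)$ and then read off the groups. The implication $(1)\Rightarrow(2)$ is immediate: the linear maps of $\C^3$ preserving the hypersurface $\{f=0\}$ form an algebraic subgroup $\Gamma\subset\GL(3,\C)$, defined by $f\circ M\in\C^{*}f$, and under $(1)$ the restriction $\Gamma\to\Aut(S_f)$ is surjective, so $\Aut(S_f)$ is a finite-dimensional algebraic group. For $(2)\Rightarrow(3)$ I would argue by contraposition: if $f=a_n$, the subgroup of $\Aut(S_{a_n})$ exhibited in the Introduction is isomorphic to $(\C[y],+)$, hence infinite-dimensional, so $(2)$ fails. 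The substance is therefore $(3)\Rightarrow(1)$, together with the two ``moreover'' statements. For this, suppose $f\in\{d_n,e_6,e_7,e_8\}$ and write $S_f=\C^2/G$ with $G\subset\SL(2,\C)$ the corresponding finite \emph{non-abelian} group (binary dihedral for $D_n$, binary tetrahedral, octahedral, icosahedral for $E_6,E_7,E_8$), so that $\C[S_f]=\C[u,v]^G$. Since every finite subgroup of $\SL(2,\C)$ acts freely on $\C^2\setminus\{0\}$, the point $0$ is the unique singular point of $S_f$ and is fixed by every $\phi\in\Aut(S_f)$.

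The heart of the argument is the following. \textbf{Key Lemma:} \emph{the normaliser of $G$ in $\Aut(\C^2)$ is contained in $\GL(2,\C)$; consequently $\Aut(S_f)\cong N/G$, where $N$ is the normaliser of $G$ in $\GL(2,\C)$.} To prove it I would lift $\phi$ through the quotient: extend $\phi^{*}$ to $\mathrm{Frac}(\C[u,v])$, a $G$-Galois extension of $\mathrm{Frac}(\C[u,v]^G)$, and then to the integral closure $\C[u,v]$, obtaining $\widetilde\phi\in\Aut(\C^2)$ which normalises $G$ and (permuting $G$-fixed points) fixes $0$, and is well-defined up to composition with an element of $G$; this already identifies $\Aut(S_f)$ with the normaliser of $G$ in $\Aut(\C^2)$ modulo $G$. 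Now take any $\psi$ in that normaliser. Its linear part $d\psi_0$ again normalises $G$ (take linear parts of the relations $\psi g\psi^{-1}\in G$), so after replacing $\psi$ by $(d\psi_0)^{-1}\psi$ we may assume $\psi$ is tangent to the identity at $0$; then each $\psi g\psi^{-1}\in G$ has linear part $g$, hence equals $g$, i.e. $\psi$ \emph{commutes} with $G$. If $\deg\psi\ge2$, then by the theorem of Jung and van der Kulk $\psi$ is not affine, so its extension to $\p^2$ contracts the line at infinity $\ell_\infty$ to a single point $p\in\ell_\infty$; by $G$-equivariance $p$ is fixed by the image $\bar G$ of $G$ in $\PGL(2,\C)$ acting on $\ell_\infty\cong\p^1$, which is absurd because $\bar G$ is non-cyclic in every $D,E$ case while a finite subgroup of $\PGL(2,\C)$ fixing a point of $\p^1$ must be cyclic. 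Hence $\psi$ is affine, and being tangent to the identity it is the identity; the Key Lemma follows.

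It then remains to compute $N/G$ and its action on $S_f\subset\C^3$. Grading $\C[u,v]^G=\C[x,y,z]/(f)$ by the $u,v$-degree, an element of $N$ acts degree-preservingly, hence scales the least-degree generator $x$. When $f=e_6,e_7,e_8$, or $d_n$ with $n\ge5$, a short degree count — using the equation $f$ to kill the unique off-diagonal term that could a priori occur (for instance an $x^{(n-2)/2}$-term in the image of $y$ when $n$ is even) — shows that $y$ and $z$ are scaled as well, so the induced action on $\C^3$ is diagonal, which is the last sentence of the theorem. For $d_4$ the two smallest invariants have equal degree and $f=x(x^2+y^2)+z^2$, so $N/G$ acts on $\langle x,y\rangle$ by matrices permuting the three lines of the binary cubic $x(x^2+y^2)$ up to scalars, producing a factor $\Sym_3$. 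Invoking the classification of finite subgroups of $\SL(2,\C)$ and their normalisers (binary tetrahedral normal of index $2$ in binary octahedral; binary octahedral and binary icosahedral self-normalising modulo scalars; binary dihedral of order $4(n-2)$ normal in that of order $8(n-2)$, with the extra $\Sym_3$ coming from $Q_8$ normal in $2O$ when $n=4$), and then dividing by $G$ and by the central subgroup $\{\pm I\}=G\cap\C^{*}I$, I obtain $\Aut(S_{e_6})\cong\C^{*}\times\{\pm1\}$, $\Aut(S_{e_7})\cong\Aut(S_{e_8})\cong\C^{*}$, $\Aut(S_{d_n})\cong\C^{*}\times\Z{2}$ for $n\ge5$ and $\Aut(S_{d_4})\cong\C^{*}\rtimes\Sym_3$; in particular every such automorphism is linear, which gives $(3)\Rightarrow(1)$.

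The main obstacle is the Key Lemma, precisely the step ruling out non-linear automorphisms of $\C^2$ that normalise a non-abelian $G\subset\SL(2,\C)$: the delicate point is that the contracted curve $\ell_\infty$ must be fixed by $\bar G$, which is exactly where the non-abelianness of the $D,E$ groups (equivalently, $\bar G$ being non-cyclic) enters, and is also what fails for $a_n$ where $G$ is cyclic. Everything else — the lift through the quotient, the degree bookkeeping, and the case-by-case computation of $N/G$ — is routine.
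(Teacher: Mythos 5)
Your proposal is correct in substance but follows a genuinely different route from the paper. The paper never invokes the presentation $S_f=\C^2/G$: it compactifies $S_f$ into a singular del Pezzo surface in $\p(1,1,1,2)$ or $\p(1,1,2,3)$ (cases $e_6,e_7,e_8$) or into a hypersurface $\mathcal{D}_n$ of a $\p^2$-bundle over $\p^1$ (case $d_n$), and shows by analysing the boundary $B=X\setminus S_f$ (number of components, their self-intersections, the tangency or cusp at their common point) that a birational self-map of $X$ biregular on $S_f$ cannot contract any component of $B$, hence is an automorphism of $X$; linearity and diagonality then come from the (anti)canonical embedding into the ambient weighted projective space or $\p^2$-bundle. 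Your route --- lift automorphisms through the quotient $\C^2\to\C^2/G$ and show that the normaliser of $G$ in $\Aut(\C^2)$ is linear via the structure of $\Aut(\C^2)$ --- is shorter for these particular surfaces and makes the $A$ versus $D,E$ dichotomy conceptually transparent (the image $\overline{G}\subset\PGL(2,\C)$ is cyclic exactly in type $A$, which is exactly when a point of $\ell_\infty$ can be $\overline{G}$-fixed); the paper's method is the one that extends to affine surfaces that are not quotient singularities. The group computations at the end agree with the paper's.

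Two steps deserve more care before this is a complete proof. First, the lifting: an automorphism of $K^G$ does not in general extend to a Galois extension $K$ (consider $t\mapsto t+1$ on $k(t)\subset k(\sqrt{t})$), so ``extend $\phi^{*}$ to $\mathrm{Frac}(\C[u,v])$'' is not pure field theory. What saves you is geometry: $\phi$ fixes the unique singular point, so the normalisation of $S_f$ in the $\phi^{*}$-twisted copy of $K$ is again a connected degree-$|G|$ cover étale over $S_f\setminus\{0\}$; since $\C^2\setminus\{0\}$ is simply connected and $\pi_1(S_f\setminus\{0\})\cong G$, that cover is isomorphic over $S_f$ to the original one, and normality of $K/K^G$ then forces the twisted field to equal $K$. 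This should be said explicitly. Second, the statement that a polynomial automorphism of $\C^2$ of degree at least $2$ contracts $\ell_\infty$ to a single point is a standard consequence of the amalgamated-product structure of $\Aut(\C^2)$ (behaviour of leading forms under composition of de Jonqui\`eres and affine maps), not literally the Jung--van der Kulk theorem; it needs a precise reference or a short proof. With those two points filled in, the reduction to $N_{\GL(2,\C)}(G)/G$, the degree bookkeeping on the generators of $\C[u,v]^G$ (using the relation $f$ to kill the possible $x^{k}$-terms), and the classical normaliser computations do yield the stated groups and the implication $(3)\Rightarrow(1)$.
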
We find once again a significative difference between the case $A$ and the cases $D,E$.\\

I thank Jacques Alev for asking me the above questions and for interesting discussions on the subject. Thanks also to Alexander Premet for some corrections on an earlier version of the paper.

\section{Rewriting of the problem of rationality}
Recall that a rational map between algebraic varieties is a map which is locally defined by quotient of polynomials. 
These "rational maps" are not  really maps because they are defined only on the open dense subset where the local quotients are determined
 A rational map is said to be a morphism if this domain is the whole variety, i.e.\ if the map is defined at every point. The rational map is birational if it admits an inverse, which is rational. 

We want to study the map $F\colon \C^3\to \C$ from the geometric and algebraic point of view. Algebraically, we will show that the extension $\C(x,y,z)/\C(f)$ is rational only in the case of $A_n$, whereas $\C(x,y,z)/\C$ and $\C(f)/\C$ are rational extenstions. Geometrically,  $F$ induces a morphism from a rational $3$-fold to a rational curve; we will show that the fibre of any (closed) point is a rational surface (i.e.  for any $t\in \C$ the complex surface $\{(x,y,z)\in \C^3 \ | \ f(x,y,z)=t\}$ is birational to $\p^2_\C$), whereas the generic fibre is rational only in the case of $A_n$  (i.e. the surface $\{(x,y,z)\in \C(t)^3 \ | \ f(x,y,z)=t\}$ defined over $\C(t)$ is birational to $\p^2_{\C(t)}$ only in the case  $A_n$).

Note that this situation is impossible in lower dimension. Indeed, for any rational map $\eta\colon \C^2\dasharrow \C$ having a general fibre which is rational, the generic fibre is rational, by Tsen Theorem. 
\section{Compactifications and rationality of them}

We will compactify the generic fibre of $F$ in order to use tools of projective geometry to show when it is rational or not.

Recall that any algebraic surface, over any perfect field $k$, is birational to a smooth projective surface $X$. One can moreover choose $X$ to be \emph{minimal}. A projective smooth surface $X$ defined over $k$ is said to be minimal if any birational morphism $X\to Y$, where $Y$ is another smooth algebraic surface, is an isomorphism. Let us say that a \emph{contractible curve} on $X$ is a curve $C\subset X$ defined over $k$, which is irreducible over $k$ and which decomposes over the algebraic closure $\bar{k}$ of $k$ into a set of disjoint $(-1)$-curves (curves isomorphic to $\p^1_{\bar{k}}$ and of self-intersection $-1$).  Since any birational morphism between smooth projective surfaces is a sequence of isomorphisms and contractions of contractible curves, the surface $X$ is minimal if and only if it does not contain contractible curves.

Recall the following classification of minimal geometrically rational surfaces (here $\Pic{S}$ denotes the Picard group of $S$, which corresponds to the group of divisors modulo linear equivalence, and $K_S$ is the canonical divisor):

\begin{prop}[\cite{bib:Man}, \cite{bib:Isk3}]\label{MinimalThenMori}
Let $S$ be a projective smooth surface defined over a perfect field $k$. If the surface $S$ is rational over $\overline{k}$ and minimal over~$k$, one of the following occurs: 

\begin{enumerate}
\item
$\Pic{S}\cong \z$ and $S$ is a del Pezzo surface $($which means that $-K_S$ is an ample divisor$)$;
\item
$\Pic{S}\cong \z^2$, and $S$ admits a conic bundle $\pi\colon S\to C$, where $C$ is a smooth rational curve of genus $0$ $($isomorphic to $\p^1$ over $\overline{k})$.  
\end{enumerate}
\end{prop}

We divide our study in two, corresponding to the two cases of Proposition~\ref{MinimalThenMori}, which will be investigated in $\S\ref{Sec:dPdf}$ and $\S\ref{Sec:CB}$ respectively.
\subsection{del Pezzo case}\label{Sec:dPdf}
Recall that a del Pezzo surface is a smooth projective surface $S$ with $-K_S$ ample. It has a degree, which is $(K_S)^2\in\{1,2,\dots,9\}$.

Over an algebraically closed field, $S$ is isomorphic to $\p^2$, $\p^1\times \p^1$ or to the blow-up of $1\le r \le 8$ points of $\p^2$ in general position (no $3$ collinear, no $6$ on the same conic, no $8$ on the same cubic being singular at one of the $8$ points); the degree is then $9-r$. For more details, see for instance \cite{bib:Dem} or \cite{bib:Be1}.

If $S$ is the blow-up of $p_1,\dots,p_r\in \p^2$, the Picard group of $S$ is generated by $e_0$, $e_1,\dots,e_r$, where $e_0$ is the pull-back of a line of $\p^2$ and $e_i$ is the exceptional curve contracted on $p_i$. The intersection form is of type $(1,-1,\dots,-1)$ (i.e. $(e_0)^2=1$, $(e_i)^2=-1$ for $i\ge 1$ and $e_i\cdot e_j=0$ if $i\not=j$). We associate to this a root system:
$$e_0-e_1-e_2-e_3, e_1-e_2,e_2-e_3,\dots,e_{r-1}-e_r$$
All this roots have self-intersection $-2$, and the intersection form between them is given by the following diagram, which is of type $E_6$, $E_7$, $E_8$ if $r=6,7,8$:

\[\xymatrix@R=0.2cm@C=0.5cm{
e_1-e_2& e_2-e_3 &e_3-e_4 &e_4-e_5 & e_{r-1}-e_r \\
\bullet \ar@{-}[r]&\bullet \ar@{-}[r]&\bullet\ar@{-}[r]&\bullet\ar@{.}[r]&\bullet\\
\\
& & \bullet\ar@{-}[uu]\\
& & e_0-e_1-e_2-e_3
}\]

Recall what are the classical canonical embeddings of del Pezzo surfaces of degree $\le 3$ (see for instance \cite{bib:Kol}, Theorem III.3.5).

If $S$ is a del Pezzo surface $S$ of degree $3$ over any field, the  anticanonical system $|-K_S|$ yields an isomorphism of $S$ with a smooth cubic in $\p^3$. Moreover, all smooth cubics are obtained by this way.

 If $S$ is a del Pezzo surface of degree $2$ over any field, the anticanonical system $|-K_S|$ gives a double covering of $\p^2$ ramified over a smooth quartic, and all smooth quartics are obtained by this way. We can then embedd $S$ into a weighted projective space $\p(1,1,1,2)$, obtaining an equation of degree $4$. Moreover, the linear system $|-mK_S|$ is given by the trace of the system of hypersurfaces of degree $m$ of $\p(1,1,1,2)$. 
 
 If $S$ is a del Pezzo surface of degree $1$ over any field, the anticanonical system $|-K_S|$ yields an elliptic fibration $S\to \p^1$ and $|-2K_S|$ gives a double covering of a quadric cone in $\p^3$. We can then embedd $S$ into a weighted projective space $\p(1,1,2,3)$, obtaining an equation of degree $6$. Moreover, the linear system $|-mK_S|$ is given by the trace of the system of hypersurfaces of degree $m$ of  $\p(1,1,2,3)$. 
 
 Note that $(-1)$-curves on a cubic surface correspond to the $27$ lines of the surfaces, which are maybe not all defined over the base-field. On a del Pezzo surface of degree $2$, the $56$ $(-1)$-curves corresponds to the $28$ bitangents of the quartic, which gives members of $|-K_S|$ decomposing into two curves. Note that here we call a bitangent a curve being tangent twice at the points \emph{or} intersecting the curve at only one point. On a del Pezzo surface of degree $1$, the $240$ $(-1)$-curves corresponds to the member of $|-2K_S|$ which decompose into two curves.\\

%
%

In cases $E_6$, $E_7$, $E_8$, we compactify the generic fibre of $F\colon \C^3 \to \C$ into a minimal del Pezzo surface of degree $3$, $2$, $1$. These are the following:

\medskip

{\bf Case $E_8$ -- } The compactification is 
$$S_8=\{(W:X:Y:Z)\in \p(1,1,2,3)_{\C(t)}\ |\ tW^6=X^5W+Y^3+Z^2\},$$
which is a del Pezzo surface of degree $1$. The morphism $S_8\to \p^1_{\C(t)}$ given by $$(W:X:Y:Z)\mapsto (W:X)$$ is an elliptic fibration, and the morphism $S_8\to \p(1,1,2)_{\C(t)}$ given by $$(W:X:Y:Z)\mapsto (W:X:Y)$$ is a double covering.

\medskip

{\bf Case $E_7$ -- } The compactification is 
$$S_7=\{(W:X:Y:Z)\in \p(1,1,1,2)_{\C(t)}\ |\ tW^4=X^3Y+Y^3W+Z^2\},$$
which is a del Pezzo surface of degree $2$. The morphism $S_7\to \p^2_{\C(t)}$ given by $$(W:X:Y:Z)\mapsto (W:X:Y)$$ is a double covering ramified over the smooth quartic $tW^4=X^3Y+Y^3W$.

\medskip

{\bf Case $E_6$ -- } The first simple compactification is 
$$S_6'=\{(W:X:Y:Z)\in \p(1,1,1,2)_{\C(t)}\ |\ tW^4=X^4+Y^3W+Z^2\},$$
which is a del Pezzo surface of degree $2$.  But the surface is not minimal, it contains two $(-1)$-curve defined over $\C(t)$, which are $W=0, Z=\pm \mathbf{i} X^2$. Note that the equation of $S_6'$ can be written as $(Z-\im X^2)(Z+\im X^2)=W(tW^3-Y^3)$ . The morphism $S_6'\to \p^3_{\C(t)}$ which sends $(W:X:Y:Z)$ onto
\begin{center}$\left\{\begin{array}{ll}
 (W^2:WX:WY:Z+\mathbf{i} X^2)& \mbox{ if } W\not=0\mbox{ or }Z\not=-\mathbf{i}X^2 \\
  (W(Z-\mathbf{i} X^2):X(Z-\mathbf{i} X^2):Y(Z-\mathbf{i} X^2):tW^3-Y^3)& \mbox { if } tW^3\not=Y^3\mbox{ or }Z\not=\mathbf{i}X^2  \end{array}\right.$\end{center}contracts the curve $W=0, Z=\im X^2$ onto $(0:0:0:1)$. The image is the smooth cubic 
  
$$S_6=\{(W:X:Y:Z)\in \p^3_{\C(t)}\ |\ Z(WZ-2\im X^2)=tW^3-Y^3\}$$

and the map $S_6'\to S_6$ is the blow-up of $(0:0:0:1)$. 

\bigskip

In order to decide when $S_6,S_7,S_8$ are rational over $\C(t)$ or a finite extension, we will use the following classical result, which is the culmination of several results of V. Iskovskikh and Yu. Manin:
\begin{prop}\label{Prop:NotRational}
Let $k$ be a perfect field and let $S$ be a del Pezzo surface of degree $d$, defined over $k$. If $S$ is minimal $($over $k)$ and $d\le 4$, then $S$ is not rational $($over $k)$.
\end{prop}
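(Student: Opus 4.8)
The plan is to deduce this from the two-dimensional (arithmetic) Sarkisov program, following Iskovskikh and Manin. Since ``$S$ is rational over $k$'' means exactly ``$S$ is $k$-birational to $\Pn_k$'', it suffices to show that a minimal del Pezzo surface $S$ with $K_S^2\le 4$ cannot be joined to $\Pn_k$ by any birational map.

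First I would invoke Iskovskikh's structure theorem: any birational map between two minimal geometrically rational surfaces over a perfect field is a finite composition of \emph{elementary links}, each of one of four standard types, and each obtained from the blow-up of a single closed point (a $\Gal(\bar{k}/k)$-orbit of $\bar{k}$-points) followed by the contraction of a contractible curve, all intermediate surfaces having Picard rank $\le 2$. The underlying finiteness is Manin's: to a birational map $\psi\colon S\dasharrow S'$ one associates the linear system $\mathcal H$ on $S$ that is the proper transform under $\psi^{-1}$ of a fixed very ample linear system on $S'$, together with a numerical ``degree'' of $\mathcal H$ measured against $-K_S$; one shows that a suitably chosen link strictly decreases this degree, so every factorization terminates. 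Alongside the theorem one has, in the same references, the \emph{complete list} of links that can issue from each minimal rational surface -- in our situation from a del Pezzo surface of Picard rank $1$, whose invariant is the degree $d=K_S^2$, and from a conic bundle $\pi\colon S\to C$ of Picard rank $2$, whose invariant is $K_S^2$, equivalently the number $8-K_S^2$ of degenerate geometric fibres.

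The decisive point I would then extract from that list is a closure statement: the class $\mathcal C$ of all minimal geometrically rational surfaces $S$ with $K_S^2\le 4$ -- by Proposition \ref{MinimalThenMori} this means del Pezzo surfaces of Picard rank $1$ and degree $\le 4$, together with conic bundles of Picard rank $2$ having at least four degenerate geometric fibres -- is stable under elementary links: every link issuing from a member of $\mathcal C$ again lands in $\mathcal C$. Since the anticanonical degree of $\Pn_k$ is $9$, it does \emph{not} belong to $\mathcal C$, while $S\in\mathcal C$ by hypothesis; hence no chain of elementary links connects $S$ to $\Pn_k$, and by the factorization theorem $S$ is not $k$-rational, which is the assertion.

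The real work -- and the reason the statement is the culmination of a long line of results of Iskovskikh and Manin rather than a one-line deduction -- lies in the two structural inputs above: the termination underlying the factorization theorem, and, above all, the completeness of the link list together with the verification that no link raises the anticanonical degree of the minimal model from $\le 4$ to $\ge 5$, over an \emph{arbitrary} perfect field. For $d=1$ this is essentially free, since a minimal del Pezzo surface of degree $1$ is birationally rigid: it is $k$-birational to no other minimal rational surface, hence not to $\Pn_k$. But for $d=2,3,4$ the surface does carry nontrivial birational self-maps (Bertini- and Geiser-type involutions, links to conic bundles, and so on), so one cannot argue from the mere absence of maps; one must show that none of these -- nor any composition -- escapes $\mathcal C$. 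Controlling, over a non-closed field, exactly which closed points may be blown up and what must be contracted -- in particular ruling out any link from a cubic surface or from a degree-$4$ del Pezzo surface to a surface of degree $>4$, the hardest case, opened for special cubics by Segre and settled in general by Manin and Iskovskikh -- is the crux, and it is this classification that I would cite rather than reprove.
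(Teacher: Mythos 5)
Your proof is correct and follows exactly the paper's argument: factor any hypothetical birational map $S\dasharrow \p^2_k$ into elementary links via Iskovskikh's theorem, then use the classification of links to see that the condition $K^2\le 4$ is preserved by every link, so the chain can never reach $\p^2_k$ with $K^2=9$. The additional remarks on termination and on the degree-by-degree subtleties are accurate but not needed beyond the citation.
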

\begin{proof}
 If $S$ was rational, there would be a birational map $S\dasharrow \p^2_k$, which decomposes into $\varphi_m\circ \dots\circ \varphi_1$, where the $\varphi_i$ are elementary links (see \cite[Theorem~2.5, page 602]{bib:IskSarkisov}). The classification of the possible links made in \cite[Theorem~2.6, page 604]{bib:IskSarkisov} implies that any link starting from a surface $X$ with $(K_{X})^2\le 4$ gives a surface $X'$ with $(K_{X'})^2\le 4$. This shows that no sequence of links starting from $S$ can go to $\p^2$, and achieves the proof.
\end{proof}

We now prove that $S_6,S_7,S_8$ are minimal over $\C(t)$, which implies that these surfaces are not rational by Proposition~\ref{Prop:NotRational}. We also study the minimal extension which makes the surface becoming rational.
\begin{prop}
Let $K$ be a finite extension of the field $\C(t)$. The surface $S_6$ is rational over $K$ if and only if $K$ contains an element $\rho$ with $\rho^{12}=t$. 

In particular, $S_6$ is not rational over $\C(t)$ and the minimal extension of $\C(t)$  that makes $S_6$ rational is $\C(\sqrt[12]{t})$, which has degree $12$ over $\C(t)$.
\end{prop}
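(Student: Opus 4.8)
The strategy has two directions. For the "if" direction, I would exhibit an explicit birational equivalence to $\p^2$ once $t$ is a $12$-th power: write $t=\rho^{12}$ and perform a weighted change of variables of the type mentioned in the second remark, $W=u^aW'$, $X=u^bX'$, etc., chosen so that the equation $Z(WZ-2\im X^2)=tW^3-Y^3$ becomes, after clearing $\rho$, the equation of a cubic surface over $\C$ that is visibly rational (for instance one with a $\C(t)$-rational line, or better a rational point lying on enough lines). Concretely, since the cubic $Z(WZ-2\im X^2)=W^3-Y^3$ over $\C$ is a smooth cubic surface and hence rational over $\C$ (all $27$ lines are defined over $\overline{\C}=\C$), base-changing to $\C(\rho)$ keeps it rational, and the substitution transports this rationality back to $S_6$ over $K$. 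I should check the smoothness of the resulting cubic and identify one line explicitly to make the rationality concrete.

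For the "only if" direction, the heart of the matter is to show that if $S_6$ is rational over $K$ then $K$ contains a $12$-th root of $t$. By Proposition~\ref{Prop:NotRational}, a minimal del Pezzo surface of degree $\le 4$ is never rational, and a smooth cubic surface has degree $3$; so the first step is: \emph{$S_6$ is rational over $K$ only if $S_{6}$ is \textbf{not} minimal over $K$}, i.e. only if $S_6$ acquires a contractible curve over $K$ — equivalently, by the structure of the $27$ lines, only if some line of the cubic (or some Galois-stable set of disjoint lines) becomes defined over $K$. Then I would analyze the field of definition of the $27$ lines of $S_6$ over $\C(t)$: I expect that the Galois action of $\Gal(\overline{\C(t)}/\C(t))$ on the $27$ lines factors through a cyclic group generated by $t\mapsto \zeta t$-type monodromy, and that the smallest extension over which any line — or any Galois-orbit of pairwise disjoint lines, or more generally any configuration allowing a Sarkisov link lowering nothing — is defined is exactly $\C(\sqrt[12]{t})$. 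This is a finite computation with the explicit cubic: list the $27$ lines, compute the field generated by their coefficients, and read off that it is $\C(\sqrt[12]{t})$ and that no proper subextension suffices to destroy minimality in a way compatible with rationality.

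The main obstacle is this last step: controlling \emph{all} ways $S_6$ could fail to be minimal or could be rational over an intermediate field. It is not enough to find the field of definition of a single line; I must rule out that, say, a Galois-stable pair or sextuple of disjoint lines — whose union is defined over a smaller field even though the individual lines are not — could be contracted to give a del Pezzo surface of degree $\ge 5$ (which \emph{is} rational). So the real work is: (1) enumerate the $27$ lines of $S_6$ with their fields of definition as subfields of $\overline{\C(t)}$; (2) determine all Galois-stable sets of mutually disjoint lines and the fields over which they are defined; (3) check that the smallest field over which $S_6$ admits \emph{any} such contraction — and then iterate, since after one contraction one may need further contractions to reach degree $\ge 5$ — is $\C(\sqrt[12]{t})$; and (4) invoke Proposition~\ref{Prop:NotRational} at each stage where the degree stays $\le 4$. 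I would organize (2)–(3) using the Weyl group $W(E_6)$ action on the $27$ lines together with the explicit cyclic monodromy image inside it, which should make the bookkeeping tractable and pin down the exponent $12$ as the order of a Coxeter element.
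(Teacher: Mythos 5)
Your strategy for the ``only if'' direction is essentially the paper's, and your ``if'' direction is a valid variant: the paper checks that all $27$ lines become rational over any $K$ containing $\sqrt[12]{t}$ and contracts six disjoint ones, whereas you descend the equation to a cubic with constant coefficients via the rescaling $(W:X:Y:Z)\mapsto(W:\rho^3X:\rho^4Y:\rho^6Z)$ (which turns $Z(WZ-2\im X^2)=tW^3-Y^3$ into $Z(WZ-2\im X^2)=W^3-Y^3$, smooth and hence rational over $\C$, so rational after base change to $K$); both work.

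Two caveats on the harder direction. First, your text is a roadmap whose load-bearing step --- actually listing the $27$ lines with their fields of definition and checking which Galois conjugates meet --- is announced but not carried out; the proposition is only proved once that finite computation is done. (It does come out as you predict: the paper finds three lines $Z=0$, $Y=\alpha W$ with $\alpha^3=t$, and $24$ lines $L_\mu$ with $\mu^{12}\in\C^{*}\cdot t$, so the splitting field is the cyclic degree-$12$ Kummer extension and the monodromy image is generated by a Coxeter element.) Second, your stated expectation that the smallest extension over which \emph{any} line is defined is $\C(\sqrt[12]{t})$ is false: the three lines $Z=0,\ Y=\alpha W$ are already defined over $\C(\sqrt[3]{t})$, so $S_6$ loses minimality over a proper subfield. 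The iteration you mention in step (3) is therefore not a precaution but genuinely necessary: when $\sqrt[3]{t}\in K$ one must observe that these three lines pairwise meet at $(0:1:0:0)$, contract only one of them, and then prove that the resulting del Pezzo surface of degree $4$ is still minimal over $K$ before invoking Proposition~\ref{Prop:NotRational}. Finally, for your step (2) the paper uses a shortcut worth adopting in place of classifying all Galois-stable disjoint sextuples inside $W(E_6)$: each $L_\mu$ has a Galois conjugate $L_{\xi\mu}$ ($\xi$ a primitive second or third root of unity) that it intersects, which at one stroke excludes all $24$ generic lines from belonging to any contractible curve over $K$.
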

\begin{proof}
The surface $S_6$ is a smooth cubic surface in $\p^3$; it contains thus  exactly $27$ lines  defined over $\overline{\C(t)}$, which are the $27$ $(-1)$-curves on $S_6$. 
Recall that $S_6$ has equation 
$$Z(WZ-2\im X^2)-tW^3+Y^3=0.$$


One checks that the three lines $L_1,L_2,L_3$ of equation
$$Z=0,\ Y=\alpha W,$$
where $\alpha\in \overline{\C(t)}$ is a third root of $t$, are contained in $S_6$.

The other $24$ lines of $S_6$ are the lines $L_\mu$ given by
$$27\im\mu^6(\sqrt{3}+3)W+18X\mu^3+(-9+5\sqrt{3})Z=0$$
$$9\im\mu^2(\sqrt{3}-1)Y+18X\mu^3+2(3-2\sqrt{3})Z=0$$

where $\mu\in  \overline{\C(t)}$ satisfies  $ \mu^{12}=\frac{1}{27}(-5\pm \frac{26}{9}\sqrt{3})t$.

If $K$ contains an element $\rho$ with $\rho^{12}=t$, all $27$ lines of $S_6$ are defined over $K$, which implies that $S_6$ is rational over $K$ (take $6$ disjoint lines and contract them to obtain $\p^2$).

Assume now that $K$ does not contain any $12$-th root of $t$ and let us prove that $S_6$ is not rational over $K$. Any   $\mu\in  \overline{\C(t)}$ satisfying  $ \mu^{12}=\frac{1}{27}(-5\pm \frac{26}{9}\sqrt{3})t$ is conjugate over $K$ to $\xi \mu$ where where $\xi$ is a primitive second or third  root of unity. The lines $L_{\mu}$ and $L_{\xi\mu}$ are thus conjugate by an element of the Galois group $\mathrm{Gal}(\overline{\C(t)}/K)$. One checks that $L_\mu$ and $L_{\xi\mu}$ intersect, which implies that no one of the $24$ curves $L_\mu$ belongs to a contractible curve (defined over $K$).

If one third root of $t$ is contained in $K$, the three lines $L_1,L_2,L_3$ are defined over $K$ but intersect each other at the point $(0:1:0:0)$. After contracting one of the three curves, the del Pezzo surface of degree $4$ that we obtain does not contain any contractible curve defined over $K$ and thus is minimal over $K$. It is therefore not rational by Proposition~\ref{Prop:NotRational}. 

If no third root of $t$ is contained in $K$, the surface $S_6$ is minimal over $K$ and also not rational by Proposition~\ref{Prop:NotRational}.
\end{proof}

\begin{prop}
Let $K$ be a finite extension of the field $\C(t)$. The surface $S_7$ is rational over $K$ if and only if $K$ contains an element $\rho$ with $\rho^{18}=t$. 

In particular, $S_7$ is not rational over $\C(t)$ and the minimal extension of $\C(t)$  that makes $S_7$ rational is $\C(\sqrt[18]{t})$, and has degree $18$ over $\C(t)$.
\end{prop}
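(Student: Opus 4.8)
The plan is to follow the strategy already used for $S_6$: after a Kummer base change the surface $S_7$ becomes a \emph{constant} del Pezzo surface of degree $2$ over $\C$, and rationality is then read off from the Galois action on its Picard lattice. For the \emph{``if''} direction, fix in $\overline{\C(t)}$ an $18$-th root $\rho$ of $t$ and a primitive $18$-th root of unity $\zeta$. Since $\rho^{18}=t$, the automorphism $(W:X:Y:Z)\mapsto(W:\rho^{4}X:\rho^{6}Y:\rho^{9}Z)$ of $\p(1,1,1,2)$ carries the constant surface $S_7^{0}=\{W^{4}=X^{3}Y+Y^{3}W+Z^{2}\}\subset\p(1,1,1,2)_{\C}$ isomorphically onto $S_7$; hence whenever $K$ contains some $\rho$ with $\rho^{18}=t$ one has $S_7\times_{\C(t)}K\cong S_7^{0}\times_{\C}K$, which is $K$-rational because $S_7^{0}$ is a smooth del Pezzo surface of degree $2$ over the algebraically closed field $\C$, hence $\C$-rational, and rationality survives extension of the base field.

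Now assume $K$ contains no $18$-th root of $t$. Put $L=\C(\sqrt[18]{t})$; since $\C$ contains $\mu_{18}$, the extension $L/\C(t)$ is cyclic of degree $18$, so $L\cap K=\C(\sqrt[m]{t})$ for a unique proper divisor $m\mid 18$, i.e.\ $m\in\{1,2,3,6,9\}$. After base change to $L$ the substitution above identifies $S_7$ with $S_7^{0}$, and transporting the Galois action through this identification shows (one checks the cocycle) that $\Gal(\overline{K}/K)$ acts on $\mathrm{Pic}(S_7\times_{\C(t)}\overline{K})\cong\mathrm{Pic}(S_7^{0})\cong\z^{8}$ by $g\mapsto\tau^{k(g)}$, where $\tau$ is the order-$18$ automorphism $(W:X:Y:Z)\mapsto(W:\zeta^{4}X:\zeta^{6}Y:\zeta^{9}Z)$ of $S_7^{0}$ and $k(g)\in\Z{18}$ is given by $g(\sqrt[18]{t})=\zeta^{k(g)}\sqrt[18]{t}$; because $\C\subseteq K$, the image of $k$ is the order-$(18/m)$ subgroup of $\Z{18}$, so the image of $\Gal(\overline{K}/K)$ in $\Aut(\mathrm{Pic}(S_7^{0}))$ is $\langle\tau^{m}\rangle$. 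Finally $\tau^{9}$ is the Geiser involution $(W:X:Y:Z)\mapsto(W:X:Y:-Z)$, which acts as $-\mathrm{id}$ on $K_{S_7^{0}}^{\perp}$ (the $E_{7}$ root lattice); this forces the characteristic polynomial of $\tau$ on $\mathrm{Pic}(S_7^{0})\otimes\C$ to be $(x-1)\,\Phi_{2}(x)\,\Phi_{18}(x)$, so the $\tau^{m}$-invariant sublattice of $\mathrm{Pic}(S_7^{0})$ has rank $1$ when $m$ is odd and rank $2$ when $m$ is even.

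If $m\in\{1,3,9\}$, the $\Gal(\overline{K}/K)$-invariant part of $\mathrm{Pic}(S_7\times_{\C(t)}\overline{K})$ equals $\z K_{S_7}$, so $S_7$ is minimal over $K$, and being a del Pezzo surface of degree $2\le 4$ it is not $K$-rational by Proposition~\ref{Prop:NotRational}. If $m\in\{2,6\}$ the invariant sublattice has rank $2$ and $S_7$ need not be minimal; here the decisive point is to produce a $(-1)$-curve over $K$. The automorphism $\bar\tau$ induced by $\tau$ on $\p^{2}$ has order $9$, so it permutes the $28$ bitangents of the branch quartic $W^{4}=X^{3}Y+Y^{3}W$ in orbits of size $1$, $3$ or $9$; since $28\equiv 1\pmod 3$, at least one bitangent is $\bar\tau$-invariant. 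Its preimage in $S_7^{0}$ splits as $\ell_{1}+\ell_{2}\in|{-}K_{S_7^{0}}|$ with $\ell_{1},\ell_{2}$ two $(-1)$-curves, and $\tau^{m}$, of odd order $18/m\in\{3,9\}$, cannot interchange them, so it fixes each; thus $\ell_{1}$ is a $(-1)$-curve defined over $K$. Contracting $\ell_{1}$ gives a birational morphism over $K$ onto a del Pezzo surface $S'$ of degree $3$; since the blow-down removes exactly the Galois-fixed class $[\ell_{1}]$ from the geometric Picard group, $S'$ has $\Gal(\overline{K}/K)$-invariant Picard group of rank $1$, hence is minimal over $K$, and by Proposition~\ref{Prop:NotRational} it, and therefore $S_7$, is not $K$-rational. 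Combining the cases proves the equivalence; in particular $S_7$ is not rational over $\C(t)$ and $\C(\sqrt[18]{t})$, of degree $18$ over $\C(t)$, is the smallest extension over which $S_7$ becomes rational.

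The step I expect to be the main obstacle is the even case, i.e.\ exhibiting the $(-1)$-curve $\ell_{1}$ over $K$ — equivalently, pinning down the rank-$2$ invariant sublattice of the $E_{7}$ del Pezzo lattice and checking that it contains the class of a $(-1)$-curve. This is the only place where the explicit geometry of the quartic $W^{4}=X^{3}Y+Y^{3}W$ and of its bitangents really enters; everything else is formal manipulation of the $E_{7}$ lattice together with Proposition~\ref{Prop:NotRational}.
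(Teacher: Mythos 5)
Your proof is correct, but it takes a genuinely different route from the one in the paper. The paper works entirely with the explicit equations: it computes all $28$ bitangents of the quartic $tW^4=X^3Y+Y^3W$ (reducing to the cubic resolvent $Q(X)=X^3-29496X^2+401808X-64$ in $e^{18}/t$), and then shows by hand that each $(-1)$-curve $L_\mu$ meets its Galois conjugate $L_{\xi\mu}$, so that none of the $54$ "generic" curves lies in a contractible orbit; the two remaining curves $Y=0$, $Z=\pm\sqrt{t}W^2$ are treated separately exactly as in your even case. You instead observe that $S_7$ is the twist of the constant del Pezzo surface $S_7^0$ by the Kummer cocycle $g\mapsto\tau^{k(g)}$, where $\tau$ is the order-$18$ automorphism $(W:X:Y:Z)\mapsto(W:\zeta^4X:\zeta^6Y:\zeta^9Z)$, and you read off minimality from the rank of the $\tau^m$-invariant part of the Picard lattice. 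This is cleaner and avoids all the elimination computations; what it buys in addition is a conceptual explanation of why the answer is the Coxeter number ($\tau$ acts on the $E_7$ lattice with characteristic polynomial $\Phi_2\Phi_{18}$, i.e.\ as a Coxeter-type element), at the price of losing the explicit list of lines. Your counting argument $28\equiv 1\pmod 3$ for the even case is a nice replacement for the paper's explicit bitangent $Y=0$ (which is in fact the invariant one). The only point I would ask you to make explicit is the step "this forces the characteristic polynomial to be $(x-1)\Phi_2(x)\Phi_{18}(x)$": the Geiser relation $\tau^9=-\mathrm{id}$ on $K_{S_7^0}^\perp$ only tells you the eigenvalues lie among primitive $2$nd, $6$th and $18$th roots of unity, and you must additionally rule out the possibilities $\Phi_6^a\Phi_2^b$ with $b\ge 3$, i.e.\ you need $\tau$ to act with order $18$ on $\mathrm{Pic}(S_7^0)$. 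This follows from the standard fact that $\Aut(S)\to W(E_7)$ is injective for a del Pezzo surface of degree $2$ (an automorphism acting trivially on $\mathrm{Pic}$ fixes seven disjoint exceptional curves, hence descends to an automorphism of $\p^2$ fixing seven points in general position), and this is genuinely needed only in the even case $m\in\{2,6\}$, since for odd $m$ the relation $(\lambda^m)^9=-1$ already excludes the eigenvalue $1$ on $K^\perp$. With that one sentence added, the argument is complete.
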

\begin{proof}
Recall that $S_7$ has equation $ tW^4=X^3Y+Y^3W+Z^2$ in $\p(1,1,1,2)$. Since $S_7$ is a del Pezzo surface of degree $2$, it has $56$ $(-1)$-curves defined over $\overline{\C(t)}$, corresponding to the $28$ bitangents of the quartic of $\p^2$ given by $$tW^4-X^3Y-Y^3W=0.$$
Recall that classically a bitangent is a curve tangent at two points, but we will allow the case where these two points coincide, and call this "undulatory tangent" again a bitangent. With this definition, every smooth quartic has exactly $28$ bitangents.

We now compute the $28$ bitangents and the $56$ corresponding $(-1)$-curves.

One checks that all lines of the form $aW+bX=0$ are not bitangent to the quartic, which implies that all bitangents are given by $$Y=aW+bX,$$ where $a,b\in \overline{\C(t)}$ are such that $tW^4-X^3(aW+bX)-(aW+bX)^3W$ is a square in $\overline{\C(t)}[W,X]$, that we write $(cX^2+dXW+eW^2)^2$. Any solution of 
\begin{equation}\label{Eq:S7Moins1}
(cW^2+dWX+eX^2)^2=tW^4-X^3(aW+bX)-(aW+bX)^3W
\end{equation}
gives a $(-1)$-curve of $S_7$, of equation $Y=aW+bX$, $Z=cW^2+dWX+eX^2$, and all $(-1)$-curves are obtained like this. Writing Equation~$(\ref{Eq:S7Moins1})$ as
$$(c^2+a^3-t)W^4+(3a^2b+2cd)W^3X+(3ab^2+2ce+d^2)W^2X^2+(a+b^3+2de)WX^3+(b+e^2)X^4=0$$
 every coefficient has to be zero. The last two coefficients yield $b=-e^2$ and $a=-2ed+e^6$.
If $e=0$, one finds $a=b=d=0$ and $c^2=t$, which yields two $(-1)$-curves $L_1,L_2$ of equation
$$Y=0, Z=\pm\sqrt{t}W^2.$$
 We can now assume that $e\not=0$ to find the remaining solutions. The third coefficient yields $c=-\frac{3ab^2+d^2}{2e}=-\frac{d^2-6de^5+3e^{10}}{2e}$. Replacing these in the  first two coefficients, we get respectively 

$$\frac{1}{4e^2}(d^4-44d^3e^5+90d^2e^{10}-60de^{15}+13e^{20}-4e^2t)\mbox{ and }$$
$$\frac{1}{e}(-d^3-6d^2e^5+9de^{10}-3e^{15}).$$

Multiplying the first coefficient by $e(28d+204e^5)$ and adding the second coefficient multiplied by 
$7d^2-299de^5+243e^{10}$ we get $$e(115e^{18}-28t)d-6e^6(11e^{18}+34t)=0.$$
Observe that $115e^8-28t$ is not zero, otherwise we would have $11e^{18}+34t=0$, which is incompatible. We have thus \begin{equation}\label{EqS7d}
d=\frac{6e^5(11e^{18}+34t)}{115e^{18}-28t}.\end{equation}
Replacing this in the second coefficient we get

$$-111e^{14}\frac{e^{54}-29496e^{36}t+401808e^{18}t^2-64t^3}{(115e^{18}-28t)^3},$$
 and find thus $54$ different solutions, which give with the two above the $56$ $(-1)$-curves. In fact, $\frac{e^{18}}{t}$ is a root of the polynomial
 $$Q(X)=X^3-29496X^2+401808X-64,$$
 which has three distinct roots in $\C$, all being real.

If $K$ contains an element $\rho$ with $\rho^{18}=t$, all $56$ $(-1)$-curves of $S_7$ are defined over $K$, which implies that $S_7$ is rational over $K$ (take $7$ disjoint lines and contract them to obtain $\p^2$).

Assume now that $K$ does not contain any $18$-th root of $t$ and let us prove that $S_7$ is not rational over $K$. Any   $\mu\in  \overline{\C(t)}$ satisfying that  $\frac{\mu^{18}}{t}$ is a root of 
$Q(X)$ is conjugate over $K$ to $\xi \mu$ where $\xi$ is a primitive second or third  root of unity. The $(-1)$-curves $L_{\mu}$ and $L_{\xi\mu}$ associated (obtained by setting $e=\mu$ and $e=\xi\mu$)  are thus conjugate by an element of the Galois group $\mathrm{Gal}(\overline{\C(t)}/K)$. As in the case of $S_6$, let us see that the $(-1)$-curves  $L_\mu$ and $L_{\xi\mu}$ intersect. For $\xi=-1$, this is because $L_{-\mu}$ is obtained from $L_\mu$ by replacing $c,d,e$ with $-c,-d,-e$ and letting $a,b$ the same. Any point of intersection of the quartic $tW^4-X^3Y-Y^3W=0$ with the bitangent of equation $Y=aW+bX$ gives a point that belongs to  both $L_\mu$ and $L_{-\mu}$. If $\xi$ is a third root of unity, $L_{\xi\mu}$ is obtained from $\mu$ by replacing $a,b,c,d,e$ with $a,\frac{b}{\xi},c,\frac{d}{\xi},\xi e$ and corresponds to replacing $X$ with $\frac{X}{\xi}$. The intersection of $L_\mu$ with $X=0$ gives one point that belongs to $L_{\xi\mu}$. We have shown that none of the $54$ $(-1)$-curves where $e\not=0$ belongs to a contractible curve (defined over $K$).

The two remaining $(-1)$-curves $L_1,L_2$ of equation $Y=0, Z=\pm\sqrt{t}W^2$ intersect each other in $(0:1:0:0)$. If $\sqrt{t}$ does not belong to $K$, $S_7$ is minimal, and otherwise we can contract one of the two $(-1)$-curves to obtain a minimal del Pezzo surface of degree $3$. In each case $S_7$ is not rational over $K$.
\end{proof}

\begin{prop}
Let $K$ be a finite extension of the field $\C(t)$. The surface $S_8$ is rational over $K$ if and only if $K$ contains an element $\rho$ with $\rho^{30}=t$. 

In particular, $S_8$ is not rational over $\C(t)$ and the minimal extension of $\C(t)$  that makes $S_8$ rational is $\C(\sqrt[30]{t})$, and has degree $30$ over $\C(t)$.
\end{prop}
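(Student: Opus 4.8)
The plan is to mirror the proofs just given for $S_6$ and $S_7$: exhibit the $240$ exceptional curves of the del Pezzo surface $S_8$ of degree $1$ explicitly in terms of a parameter, show that their field of definition over $\C(t)$ is generated by a $30$-th root of $t$, and then argue that over any $K$ not containing such a root, no contractible curve exists, so that $S_8$ remains minimal of degree $\le 4$ and hence non-rational by Proposition~\ref{Prop:NotRational}.

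\textbf{The easy direction.} If $K$ contains $\rho$ with $\rho^{30}=t$, I would first check that all $240$ $(-1)$-curves of $S_8$ are then defined over $K$. One concrete way: over $\C(\sqrt[30]{t})$ the equation $tW^6=X^5W+Y^3+Z^2$ can be normalized by the substitution $W=\rho^6 W'$, $X=\rho^6 X'$, $Y=\rho^{10}Y'$, $Z=\rho^{15}Z'$ (the exponents dictated by the weights $1,1,2,3$ and by making $t$ disappear), turning $S_8$ into the \emph{constant} del Pezzo surface $W'^6 = X'^5W'+Y'^3+Z'^2$ over $\C$, all of whose $240$ lines are defined over $\C\subset K$. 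Pulling back, the $240$ exceptional curves of $S_8$ are defined over $K$; contracting $8$ pairwise disjoint ones gives $\p^2_K$, so $S_8$ is rational over $K$. The number $30$ being the Coxeter number of $E_8$ is exactly what makes the weights $(6,6,10,15,30)$ for $(W,X,Y,Z,t)$ work — this is the same phenomenon recorded in Remark after Theorem~\ref{Thm:NonRatDegre}.

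\textbf{The hard direction.} Assume $K$ contains no $30$-th root of $t$; I must show $S_8$ is not rational over $K$, i.e.\ that after contracting all contractible curves defined over $K$ one lands on a minimal del Pezzo surface of degree $\le 4$ (never reaching degree $\ge 5$ or a conic bundle) — then Proposition~\ref{Prop:NotRational} finishes. The key is: no $(-1)$-curve, and more generally no orbit of $(-1)$-curves forming a contractible curve, can be defined over $K$. Following the $S_6$, $S_7$ pattern, I would parametrize the $240$ curves by a parameter $e$ (or rather by the value of some explicit monomial in the coefficients of a cubic section realizing the curve via the double cover $S_8\to\p(1,1,2)$ branched over a sextic), obtaining that the relevant parameter satisfies an equation $\mu^{30}=c\cdot t$ for various constants $c\in\C^*$, equivalently $\mu^{30}/t$ is a root of some fixed polynomial $Q_8\in\C[X]$ of degree dividing $8$. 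For any such $\mu\notin K(\ldots)$, a generator $\xi$ of the cyclic group permuting the $30$-th roots acts, and — exactly as for $S_6,S_7$ — one checks that $L_\mu$ and $L_{\xi\mu}$ \emph{intersect} (via a shared point on the branch sextic, or a shared point on a coordinate hyperplane after the scaling $\xi$), so a Galois orbit never consists of \emph{disjoint} $(-1)$-curves; hence it is not contractible. There may be a few "exceptional" $(-1)$-curves (the analogues of $L_1,L_2$ with $Y=0,Z=\pm\sqrt t\,W^3$ in the $S_7$ case) defined over smaller extensions like $\C(\sqrt t)$ or $\C(\sqrt[3]{t})$ or $\C(\sqrt[5]{t})$; for each I would contract whatever is available over $K$ and check the resulting del Pezzo surface of degree $2,3$, or $4$ still has no contractible curve, hence is minimal of degree $\le 4$.

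\textbf{Main obstacle.} The genuine difficulty is the explicit computation of the $240$ exceptional curves of $S_8$ and verifying the pairwise-intersection claim within each Galois orbit — the degree-$1$ case is computationally the heaviest of the three (the anticanonical map is only $2$-to-$1$ onto a quadric cone, and $(-1)$-curves correspond to components of members of $|-2K_{S_8}|$ splitting into two halves), so the elimination producing "$\mu^{30}/t$ is a root of $Q_8$" will be long. But structurally nothing new is needed beyond what was done for $S_6$ and $S_7$: once the parametrization and the intersection lemma for $\{L_\mu,L_{\xi\mu}\}$ are in hand, the minimality argument and the appeal to Proposition~\ref{Prop:NotRational} are identical. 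I would therefore present the parametrization, state the intersection property, dispose of the finitely many special curves, and conclude; the bookkeeping of constants in $Q_8$ I would relegate to a direct verification.
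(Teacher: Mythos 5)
Your proposal follows essentially the same route as the paper: explicit parametrization of the $240$ $(-1)$-curves by a parameter $\mu$ with $\mu^{30}t$ a root of a fixed polynomial, rationality over any $K$ containing $\sqrt[30]{t}$ by contracting $8$ disjoint curves, and non-rationality otherwise because each $L_\mu$ meets its Galois conjugate $L_{\xi\mu}$ for $\xi$ of order $2$, $3$ or $5$ (verified exactly as you suggest, via a common point on the fixed locus $Z=0$, $Y=0$ or $X=0$ of the induced scaling automorphism), so $S_8$ is minimal and Proposition~\ref{Prop:NotRational} applies. Two minor points: in this case all $240$ curves lie in the single $\mu$-family (there is no analogue of the special curves $L_1,L_2$ from the $S_7$ case, so no extra contractions are needed), and your normalizing substitution should leave $W$ unscaled ($W=W'$, $X=\rho^6X'$, $Y=\rho^{10}Y'$, $Z=\rho^{15}Z'$), although that aside is not needed once the curves are exhibited explicitly over $\C(\sqrt[30]{t})$.
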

\begin{proof}
Recall that $S_8$ has equation $ tW^6=X^5W+Y^3+Z^2$ in $\p(1,1,2,3)$. Since $S_8$ is a del Pezzo surface of degree $2$, it has $240$ $(-1)$-curves defined over $\overline{\C(t)}$. Similarly as in the case of $S_7$, these correspond to the hyperplane sections of $\p(1,1,2)$ that cut into two pieces. More precisely, these are the curves of equation
$$Y=aW^2+bWX+cX^2, Z=d W^3+eW^2X+fWX^2+gX^3,$$
where $tW^6-X^5W-(aW^2+bWX-cX^2)^3=(d W^3+eW^2X+fWX^2+gX^3)^2$.
The coefficient of $X^6$ yields $c^3=g^2$, so we can add a new variable $\mu\in \overline{\C(t)}$ with $c=\mu^2$, $g=\mu^3$. The coefficient of $X^5W$ yields $1+3bc^2=2gf$, so $1+3\mu^4b=2\mu^3 f$, which implies that $\mu\not=0$ and $f=\frac{1+3\mu^4b}{2\mu^3}$. We replace these in the coefficient of $X^4W^2$ and find $e=\frac{12\mu^{10}a+3\mu^8b^2-6\mu^4b-1}{8\mu^9}$, then find $d=\frac{12\mu^{14}ab-\mu^{12}b^3-12\mu^{10}a+15\mu^8b^2+9\mu^4b+1}{16\mu^{15}}$ with the coefficient of $X^3W^3.$ 

The coefficients of $X^2W^4$ and $XW^5$ become two polynomials of degree $2$ in $a$ (and higher degree in $b,\mu$), so one can make a linear combination to cancel the terms of degree $2$ and find a linear equation in $a$, which yields
$$a=\frac{10b^4\mu^{16}+85b^3\mu^{12}+90b^2\mu^8+25\mu^4 b+2}{30\mu^{10}(b^2\mu^8+4\mu^4b+1)}$$ (one checks that $b^2\mu^8+4\mu^4b+1=0$ is incompatible with the two equations we obtained). 

Replacing the value of $a$ in the coefficients of $X^2W^4$ and $XW^5$ one sees that these are zero if and only if one of the following two polynomials vanishes:

$$P_1=5b^4\mu^{16}-690\mu^{12}b^3-260\mu^8 b^2-30\mu^4b-1$$
$$P_2=5b^4\mu^{16}+10\mu^{12}b^3-20\mu^8b^2-10\mu^4b-1$$

For each of the polynomials, one performs successive  polynomial divisions between $P_i$ and the coefficient of $W^6$ (which has degree $9$ in $b$) to obtain a polynomial of degree $1$ in $b$ and obtain then $b$ in terms of $\mu$. One gets respectively 

$b=\frac{-5(16307084980800\mu^{90}t^3-60864048645838405658640\mu^{60}t^2+1761869851700383404t\mu^{30}-2251428325403)}
{2\mu^4(198455329800000\mu^{90}t^3-740708401360188117142800\mu^{60}t^2+20921826963788922780t\mu^{30}-40377544164371)}$

or

$b=\frac{-10(224784123775200\mu^{90}t^3+2858233826211840\mu^{60}t^2+7607560177676934\mu^{30}t-9692094622039483)}{\mu^4(15103883194560000\mu^{90}t^3+194657569344061200\mu^{60}t^2+526365630369285480\mu^{30}t-667567630291039199)}$
and the remaining equation for $\mu$ that we obtain is  respectively
$$108000\mu^{30}t(5400\mu^{90}t^3-20154789349200\mu^{60}t^2+522900235t\mu^{30}+1254)+1=0,$$
$$108000\mu^{30}t(5400\mu^{90}t^3-10810800\mu^{60}t^2-44551045t\mu^{30}-611864)+1=0.$$
So $\mu^{30}t$ is root of one of the polynomials
$$Q_1(X)=108000X(5400X^3-20154789349200X^2+522900235X+1254)+1,$$
$$Q_2(X)=108000X(5400X^3-10810800X^2-44551045X-611864)+1,$$
which have both $4$ distinct roots in $\C$, all being real. This yields the $240$ $(-1)$-curves of the surface $S_8$.

If $K$ contains an element $\rho$ with $\rho^{30}=t$, all $240$ $(-1)$-curves of $S_8$ are defined over $K$, which implies that $S_8$ is rational over $K$ (take $8$ disjoint lines and contract them to obtain $\p^2$).

Assume now that $K$ does not contain any $30$-th root of $t$ and let us prove that $S_8$ is not rational over $K$. Any   $\mu\in  \overline{\C(t)}$ satisfying that  $\mu^{30}{t}$ is a root of 
one of the two polynomials $Q_1,Q_2$ is conjugate over $K$ to $\xi \mu$ where $\xi$ is a primitive second, third or fifth  root of unity. The $(-1)$-curves $L_{\mu}$ and $L_{\xi\mu}$ associated   are thus conjugate by an element of the Galois group $\mathrm{Gal}(\overline{\C(t)}/K)$. As in the cases of $S_6$, $S_7$, let us see that the $(-1)$-curves  $L_\mu$ and $L_{\xi\mu}$ intersect. 

Note that $L_{\xi\mu}$ is obtained from $L_\mu$ by replacing \begin{center}$a,b,c,d,e,f,g,\mu$ with $\frac{a}{\xi^{10}},\frac{b}{\xi^{4}},c\xi^2,\frac{d}{\xi^{15}},\frac{e}{\xi^9},\frac{f}{\xi^3},g\xi^3$.\end{center}
Remembering that $L_\mu$ is given by
$$Y=aW^2+bWX+cX^2, Z=d W^3+eW^2X+fWX^2+gX^3,$$
 $L_{\xi\mu}$ is obtained by replacing $W$ and $X$ with $\frac{W}{\xi^5}$ and $\xi X$, which corresponds to the automorphism
 $$\theta_\xi\colon (W:X:Y:Z)\mapsto (\frac{W}{\xi^5}:\xi X:Y:Z)=(\frac{W}{\xi^6}:X:\frac{Y}{\xi^2}:\frac{Z}{\xi^3}).$$
 Any point of $L_\mu$ that is fixed by $\theta_\xi$ is also contained in $L_{\mu\xi}$. It suffices to show that this point exists to see that $L_\mu$ and $L_{\mu\xi}$ intersect. If $\xi^5=1$, we cut $L_\mu$ with $X=0$ and get one point. If $\xi^3=1$, $\theta_\xi((W:X:Y:Z))=(\xi W:\xi X:Y:Z)=(W:X:Y\xi:Z)$, so we cut $L_\mu$ with $Y=0$ and get two points (or one with multiplicity $2$). If $\xi=-1$, $\theta_\xi((W:X:Y:Z))=(-W:-X:Y:Z)=(W:X:Y:-Z)$, so we cut $L_\mu$ with $Z=0$ and get three points (or less, with multiplicity).
 
 In each case, the fact that $L_\mu$ and $L_{\mu\xi}$ intersect implies that no one of the $240$ curves $L_\mu$ belongs to a contractible curve (defined over $K$). The surface $S_8$ is therefore minimal over $K$ and not rational  by Proposition~\ref{Prop:NotRational}.\end{proof}

\subsection{Conic bundle case}\label{Sec:CB}
Note that the $x$-projection $\A^3\to \A^1$ restricted to the cases $A_n$ and $D_n$ gives  a fibration where a general fibre is an affine conic. We will extend this and obtain a natural compactification as a conic bundle, so that any fibre is a projective conic, which can be smooth or the union of two transerval lines (in this case the fibre is a singular fibre, and only finitely many of them occur). We can then embedd this conic bundle over the affine line $\A^1_{\C(t)}$ into a projective surface being a conic bundle over the projective line $\p^1_{\C(t)}$. 

We can therefore suppose that the conic bundle $\pi\colon S\to \p^1_{\C(t)}$ is \emph{minimal}. Similary as for surface, a conic bundle $(S,\pi)$ is minimal if any birational morphism $\varphi\colon S\to S'$, where $(S',\pi')$ is another conic bundle satisfying $\pi=\pi'\circ \varphi$, is an isomorphism.

It follows from this definition that a conic bundle $\pi\colon S\to \p^1_{\C(t)}$ is minimal if and only if there is no contractible curve on $S$ which is contained in a finite number of fibres. Indeed, any birational morphism between two conic bundles which is not a isomorphism contracts a finite number of contractible curves, all contained in a finite number of fibres.

Every smooth fibre is not contractible, but any component $f_1$ of a singular fibre $f=f_1\cup f_2$ is contractible over $\overline{\C(t)}$; its orbit by the Galois group $\mathrm{Gal}(\overline{\C(t)}/\C(t))$ is then contractible if and only it does not contain the component $f_2$.

In order to decide when the surface obtained is rational over $\C(t)$ or a finite extension, we will use the following classical result, which is the analogue of Proposition~\ref{Prop:NotRational} for conic bundles.

\begin{prop}\label{Prop:ConicBundlesNotRational}
Let $k$ be a perfect field and let $S$ be a smooth projective surface that admits a conic bundle structure $\pi\colon S\to \p^1_k$. Suppose that $(S,\pi)$ is minimal $($over $k)$ and that the number of singular fibres of $\pi$ is $d\ge 0$.

\begin{enumerate}
\item
If $d\le 1$ and if there exists a point $p\in S$ defined over $k$ then $S$ is rational $($over $k)$.
\item
If $d\ge 4$, then $S$ is not rational $($over $k)$.
\end{enumerate}
\end{prop}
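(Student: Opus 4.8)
The plan is to prove the two parts by the standard birational techniques of Iskovskikh for conic bundles over $\p^1_k$, which mirror the del Pezzo case treated in Proposition~\ref{Prop:NotRational}. For part (1), suppose $(S,\pi)$ is minimal with $d\le 1$ singular fibres and carries a $k$-rational point $p$. If $d=0$ then $\pi$ has no singular fibres, so $(S,\pi)$ is a $\p^1$-bundle over $\p^1_k$ (a Hirzebruch-type surface over $k$); minimality forces the rank of $\Pic S$ to be $2$, and the existence of a $k$-point shows the generic fibre is a conic over $k(\p^1)$ with a rational point, hence $\cong \p^1_{k(\p^1)}$, so $S$ is birational to $\p^1_k\times\p^1_k$ and thus rational over $k$. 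If $d=1$, then over $\overline{k}$ there is exactly one singular fibre, which is impossible for a conic bundle over $\overline{k}$ (singular fibres come in the right parity for $K_S^2$ to work out, and a single geometric singular fibre cannot occur unless it is Galois-stable and already reducible over $k$, contradicting minimality). So the case $d=1$ reduces to $d=0$; alternatively, one blows down one component of the unique singular fibre using the $k$-point to stay on a conic bundle, lowering $d$ to $0$, and concludes as before. I would cite \cite[Theorem~2.6]{bib:IskSarkisov} for the precise bookkeeping.

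For part (2), assume $d\ge 4$ and suppose for contradiction that $S$ is rational over $k$. Then there is a birational map $S\dasharrow \p^2_k$, which by the Sarkisov program factors as a composition of elementary links $\varphi_m\circ\cdots\circ\varphi_1$ \cite[Theorem~2.5, page~602]{bib:IskSarkisov}. One then invokes the classification of links in \cite[Theorem~2.6, page~604]{bib:IskSarkisov}: a link out of a minimal conic bundle $X\to\p^1$ with $d$ singular fibres either goes to another conic bundle with the same number of singular fibres, or to a conic bundle with $d$ singular fibres over a different base (still $\p^1$), or — in the del Pezzo direction — can only occur when the relevant numerical invariant is small, i.e. when $8-d = K_X^2$ is large enough, which fails for $d\ge 4$ since then $K_X^2\le 4$ and Proposition~\ref{Prop:NotRational} already obstructs passage to $\p^2$ through the del Pezzo side. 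Combining these, no sequence of links starting from $S$ can ever reach $\p^2_k$, because the quantity $d$ (equivalently $K_S^2 = 8-d$) is a birational invariant of the "conic bundle type" preserved by all admissible links when it is $\ge 4$. This contradiction proves $S$ is not rational over $k$.

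The main obstacle is part (2): one must correctly extract from the Sarkisov/Iskovskikh link classification the statement that $d\ge 4$ is preserved along any chain of links, including the links that pass between the conic bundle world and the del Pezzo world. Concretely, the delicate point is handling links of type II (birational maps of conic bundles over $\p^1$ that change the positions of singular fibres but not their number) and ruling out links of type III/IV that would decrease $d$; the key numerical input is that such links preserve $K_X^2$, and that $K_X^2 = 8-d \le 4$ together with Proposition~\ref{Prop:NotRational} blocks every exit to $\p^2$. I would phrase the argument so that it runs in exact parallel with the proof of Proposition~\ref{Prop:NotRational}, replacing "del Pezzo of degree $\le 4$" by "conic bundle with $\ge 4$ singular fibres" as the obstruction class that is stable under all links.
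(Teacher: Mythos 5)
Your part (2) is essentially the paper's own argument: factor a hypothetical birational map $S\dasharrow\p^2_k$ into elementary links via \cite[Theorem~2.5]{bib:IskSarkisov} and use the classification of links to see that the condition $(K_X)^2\le 4$ can never be destroyed, so $\p^2$ (with $K^2=9$) is unreachable. That part is fine.

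Part (1), however, contains a genuine gap. You assert that "the existence of a $k$-point shows the generic fibre is a conic over $k(\p^1)$ with a rational point." This is a non-sequitur as written: a $k$-point $p\in S$ lies in the closed fibre over $\pi(p)$, and a rational point of a special fibre does not produce a rational point of the generic fibre. The implication is in fact true when $d=0$, but it requires an argument — e.g.\ that the generic fibre, being a conic over $k(t)$ with good reduction at every place of $\p^1_k$, has constant class in the Brauer group (Faddeev: $\mathrm{Br}(\p^1_k)=\mathrm{Br}(k)$), so that triviality of the fibre through $p$ forces triviality of the generic fibre. You give no such argument. The paper sidesteps this entirely with an elementary geometric device: the fibre through $p$ is a smooth conic with a $k$-point, hence $\cong\p^1_k$ with infinitely many $k$-points; performing elementary transformations (blow up a $k$-point of that fibre off the exceptional section, contract the strict transform of the fibre) walks the $k$-form of $\mathbb{F}_n$ down to $\mathbb{F}_1$, whose exceptional section is defined over $k$ and can be contracted to reach $\p^2$. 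Your treatment of $d=1$ is also shaky: the parenthetical "cannot occur unless it is Galois-stable and already reducible over $k$" ignores the possibility that the two components of the unique singular fibre are swapped by Galois (so the fibre is irreducible over $k$ and not contractible, and minimality is not directly violated); ruling this out needs a lattice/parity argument in $\Pic{S_{\overline{k}}}$. And your proposed "alternative" — blowing down one component of the unique singular fibre — would itself contradict minimality rather than exploit it. The conclusion $d\ne 1$ is correct (and the paper also only asserts it), but neither of your two justifications works as stated.
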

\begin{proof}
First assume that $d\le 1$ and that there exists a point $p\in S$ defined over $k$. In fact, $d=0$ is the only possibility because of the minimality of $(S,\pi)$. The surface is a line bundle defined, so is equivalent over $\overline{k}$ to an Hirzebruch surface $\mathbb{F}_n$. The point $p$ belongs to a fibre defined over $k$, which contains points that do not lie on the exceptional section. Blowing-up one of this point and contracting the transform of the fibre gives rise to a Hirzebruch surface $\mathbb{F}_{n-1}$ if $n\not=0$ and $\mathbb{F}_{n+1}$ otherwise. By induction we can go to $\mathbb{F}_1$; the unique exceptional section is then defined over $k$ and we can contract it to go to $\mathbb{P}^2$, so $S$ is rational.

Suppose now that $d\ge 4$, which implies that $(K_S)^2=8-d\le 4$, and let us show that $S$ is not rational. The proof is the same as in Proposition~\ref{Prop:NotRational}.  If $S$ was rational, there would be a birational map $S\dasharrow \p^2_k$, which decomposes into $\varphi_m\circ \dots\circ \varphi_1$, where the $\varphi_i$ are elementary links (see \cite[Theorem~2.5, page 602]{bib:IskSarkisov}). The classification of the possible links made in \cite[Theorem~2.6, page 604]{bib:IskSarkisov} implies that any link starting from a surface $X$ with $(K_{X})^2\le 4$ gives a surface $X'$ with $(K_{X'})^2\le 4$. This shows that no sequence of links starting from $S$ can go to $\p^2$, and achieves the proof.
\end{proof}

{\bf Case $A_n$ -- } 
The generic fibre of $F$ is the affine surface $$\{(x,y,z)\in \A^{3}_{\C(t)}\ |\  x^n-yz=t\},$$ defined over $\C(t)$, which naturally embeds into the quasi-projective surface
$$U=\{(x,(w:y:z))\in \A^1_{\C(t)}\times \p^2_{\C(t)} \ |\ x^nw^2-yz=tw^2\}$$
via the map $(x,y,z)\mapsto (x,(1:y:z))$. The projection $U\to \A^1_{\C(t)}$ yields a conic bundle. Any general fibre is a smooth conic, and there are exactly $n$ singular fibres over $\overline{\C(t)}$, consisting of two intersecting lines, above the points $x^n=t$. We can compactify $U$ to a projective surface $X$ defined over $\C(t)$ which has a conic bundle structure $\pi\colon X\to \p^1_{\C(t)}$ extending the conic bundle structure on $U$; the singular fibres are then two $(-1)$-curves intersecting into one point. 

The curve $y=0$, $x^n=t$ is defined over $\C(t)$ and is thus contractible (it is the union of $n$ $(-1)$-curves which belong to the same orbit of the Galois group of $\overline{C(t)}/\C(t)$). This leads to a projective conic bundle with at most one singular fibre, the fibre at infinity that we do not see on $U$. This one is  rational by Proposition~\ref{Prop:ConicBundlesNotRational}.

\bigskip

{\bf Case $D_n$ -- } 
The generic fibre of $F$ is the affine surface $$\{(x,y,z)\in \A^{3}_{\C(t)}\ |\  x^{n-1}+xy^2+z^2=t\},$$ defined over $\C(t)$, which naturally embeds into the quasi-projective surface
$$U_n=\{(x,(w:y:z))\in \A^1_{\C(t)}\times \p^2_{\C(t)} \ |\ x^{n-1}w^2+xy^2+z^2=tw^2\}$$
via the map $(x,y,z)\mapsto (x,(1:y:z))$. As before, the projection $U\to \A^1_{\C(t)}$ yields a conic bundle. Any general fibre is a smooth conic, and there are exactly $n$ singular fibres over $\overline{\C(t)}$, consisting of two intersecting lines, above the points $x=0$ and $x^{n-1}=t$.

The difference with the case $A_n$ is that now that the singular fibres do not necessarily contain contractible curves, since there are component that are conjugate by the Galois group to curves that they touch. More precisely we have the following:

\begin{prop}
Let $K$ be a finite extension of the field $\C(t)$, and let $n\ge 4$. We write $2n-2=ab$ where $a,b$ are integers, $a$ is a power of two and $b$ is odd. The surface $U_n$ is rational over $K$ if and only if $K$ contains an element $\rho$ with $\rho^{a}=t$. 

In particular, $U_n$ is not rational over $\C(t)$ and the minimal extension of $\C(t)$  that makes $U_n$ rational is $\C(\sqrt[a]{t})$, and has degree $a$ over $\C(t)$.
\end{prop}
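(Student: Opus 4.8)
The plan is to study the conic bundle $\pi\colon X\to\p^1_{\C(t)}$ compactifying $U_n$, to decide for a given finite extension $K$ which of its singular fibres survive in the $\pi$-minimal model over $K$, and then to invoke Proposition~\ref{Prop:ConicBundlesNotRational}. First I would pin down the fibres of $X$. Over $\overline{\C(t)}$ the conic $(x^{n-1}-t)w^2+xy^2+z^2=0$ has determinant $x(x^{n-1}-t)$, so it is singular exactly over $x=0$, over the $n-1$ points with $x^{n-1}=t$, and possibly over $x=\infty$: over $x=0$ the fibre is the pair of lines $z=\pm\sqrt{t}\,w$ meeting at $(0:1:0)$; over $x=x_0$ with $x_0^{n-1}=t$ it is the pair of lines $z=\pm\sqrt{-x_0}\,y$ meeting at $(1:0:0)$; and the fibre over $x=\infty$ arises by resolving the non-reduced fibre of the naive closure. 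By the genus-$0$ adjunction formula this last fibre, in the relatively minimal model, is either smooth or a pair of distinct lines (meeting at the $K$-rational point $\infty$), so it contributes at most one singular fibre, and its precise shape will play no role.

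Next, using the contractibility criterion from the excerpt (a component of a singular fibre is contractible over $K$ precisely when its $\mathrm{Gal}(\overline{\C(t)}/K)$-orbit avoids the complementary component of its fibre), I would determine removability of the two families. The fibre over $x=0$ is removable iff $\sqrt{t}\in K$. For the clump over $x^{n-1}=t$: since $\C$ contains all roots of unity, all of these points have the same residue field $K(\rho)$ with $\rho^{n-1}=t$, the $2(n-1)$ geometric components are the lines $L_\mu=\{z=\mu y\}$ indexed by $\mu^{2(n-1)}=(-1)^{n-1}t$, and $\mathrm{Gal}(\overline{\C(t)}/K)$ acts on them by $L_\mu\mapsto L_{\omega\mu}$ with $\omega$ a root of unity. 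The clump is removable iff one can choose $\mathrm{Gal}$-equivariantly one component per geometric fibre, i.e.\ iff the double cover $\mu\mapsto -\mu^2$ of $\mathrm{Spec}\,K[\rho]/(\rho^{n-1}-t)$ splits, i.e.\ iff $\sqrt{-\rho}\in K(\rho)$. If it does not, then (the Kummer extension $K(\mu)/K$ having even degree) $-\mu$ is conjugate to $\mu$ over $K$, so every $K$-irreducible piece of the clump contains two components of a common fibre; the clump is then entirely non-removable and contributes all $n-1$ of its geometric singular fibres.

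The crucial step is to reduce both field conditions to the single one in the statement. Write $n-1=2^k m$ with $m$ odd, so $a=2^{k+1}$. The valuation at $t=0$ shows $X^{2(n-1)}-(-1)^{n-1}t$ to be irreducible over $\C(t)$, so $\sqrt{-\rho}$, whose $2(n-1)$-th power is $(-1)^{n-1}t$, generates the cyclic Kummer extension $\C(t)(t^{1/2(n-1)})$; moreover $K\cap\bigcup_{N\ge 1}\C(t)(t^{1/N})=\C(t)(t^{1/N_0})$ for a well-defined maximal $N_0$. Hence $\sqrt{-\rho}\in K(\rho)$ iff $2(n-1)\mid\mathrm{lcm}(N_0,n-1)$ iff $2^{k+1}\mid N_0$ iff $K$ contains an $a$-th root of $t$. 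Since $a$ is even, $a\mid N_0$ forces $2\mid N_0$, i.e.\ $\sqrt{t}\in K$, so the fibre over $x=0$ is then removable as well; and for $n=4$ the very same computation gives conversely $\sqrt{t}\in K\Rightarrow\sqrt{-\rho}\in K(\rho)$, so the two removability conditions coincide in that case.

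Putting this together: if $K$ contains an $a$-th root of $t$, every finite singular fibre of $X$ is removable, the $\pi$-minimal model of $X$ over $K$ has at most one singular fibre, and it carries a $K$-point (from $(0,0,\sqrt{t})\in U_n$, which exists as $\sqrt t\in K$, using that $K$ is infinite to move off the exceptional locus), so it is rational by Proposition~\ref{Prop:ConicBundlesNotRational}(1), hence so is $U_n$. If $K$ contains no $a$-th root of $t$, the clump is not removable and contributes $n-1$ singular fibres; when $\sqrt{t}\notin K$ the fibre over $x=0$ adds one more, giving at least $n\ge 4$, and when $\sqrt{t}\in K$ necessarily $n\ge 5$ (the two conditions coincide for $n=4$), giving at least $n-1\ge 4$; so in either case the $\pi$-minimal model over $K$ has at least $4$ singular fibres and is not rational by Proposition~\ref{Prop:ConicBundlesNotRational}(2), hence neither is $U_n$. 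The main obstacle is twofold: choosing the compactification so that the fibre over $\infty$ is an honest (at worst reducible) conic contributing at most one singular fibre — only this bound, never the exact contribution, is ever needed — and the Galois-orbit bookkeeping showing the clump contributes either $0$ or all of its $n-1$ fibres.
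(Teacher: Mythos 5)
Your proof is correct and follows essentially the same route as the paper: compactify $U_n$ to a conic bundle over $\p^1_{\C(t)}$, decide via Galois orbits which components of the singular fibres over $x=0$ and $x^{n-1}=t$ are contractible over $K$, and conclude with the two parts of Proposition~\ref{Prop:ConicBundlesNotRational}. The only differences are presentational: you phrase the contractibility of the clump over $x^{n-1}=t$ through Kummer theory and the procyclic extension $\bigcup_N\C(t)(t^{1/N})$ where the paper factors $X^{ab}-t=\prod_i(X^b-\rho\xi^i)$ explicitly and checks which factors meet, and you (rightly) make explicit the $K$-point needed for Proposition~\ref{Prop:ConicBundlesNotRational}(1).
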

\begin{proof}
 We can again compactify $U_n\to \mathbb{A}^1_{\C(t)}$ to a conic bundle $X_n\to \p^1_{\C(t)}$, adding only one curve which can be smooth or the union of two intersecting lines. We can moreover assume that this curve is irreducible over $\C(t)$, otherwise we contract one of the two components.

Let us study the different singular fibres (over $\overline{\C(t)}$). 
The fibre over $x=0$ is the conic $z^2=t w^2$, which decomposes into the two lines $z=\pm\sqrt{t}w$. The fibres over $x^{n-1}=t$ are $n-1$ singular fibres given by $x^{n-1}=t$, $y^2x+z^2=0$. These decompose into $2n-2$ $(-1)$-curves corresponding to $x=\mu^2$, $z=\im y \mu$, where $\mu\in \overline{\C(t)}$ is a root of the polynomial $Q(X)=X^{2(n-1)}-t$. The last possible singular fibre is the fibre at infinity, that we do not see on $U_n$. We will not  need to study it.

Suppose first that there exists $\rho\in K$ with $\rho^{a}=t$. Since $a\ge 2$ is even, each of the two components of the fibre $x=0$ is defined over $K$. Denoting by $\xi\in \C$ a primitive $a$-th root of unity, the polynomial $Q(X)$ decomposes in $K[X]$ into $Q(X)=\prod_{i=1}^a (X^b-\rho \xi^i)$. The $2(n-1)$ components of the singular fibres over non-zero fibres decompose then into an union of $a$ curves $C_1,\dots,C_a$ of $b$ components, where $C_i$ is a curve define over $K$ given by 
$x^b=(\rho\xi^i)^2$, $z^b=\im ^b y^b(\rho\xi^i)$ for each $i=1,\dots,a$. Note that $C_i$ is the union of $b$ disjoint $(-1)$-curves, and that $C_i$ intersect $C_j$ if and only if $i-j=\pm \frac{a}{2}$. We can then contract the curves $C_1,C_2,\dots,C_{\frac{a}{2}}$ and the curve $x=0$, $w=\rho^{\frac{a}{2}}w$, to obtain a minimal conic bundle with zero or one singular fibres, which is rational by Proposition~\ref{Prop:ConicBundlesNotRational}.

Suppose now that $K$ does not contain any $a$-th root of $t$, and let us prove that $U_n$ is not rational over $K$. Since $a$ is a power of $2$, $b$ is odd and $Q(x)=x^{ab}-t$, any $\mu\in \overline{\C(t)}$ satisfying $Q(\mu)=0$ is conjugate over $K$ to $- \xi$. This means that the $(-1)$-curve $x=\mu^2$, $z=\im y \mu$ touches its conjugate $x=\mu^2$, $z=-\im y \mu$. Contracting the curves over $x=0$ or at infinity if needed, we obtain a minimal conic bundle over $K$ with at least $n-1\ge 3$ singular fibres, and which is therefore not rational if $n\ge 5$ by Proposition~\ref{Prop:ConicBundlesNotRational}. It remains to study the case $n=4$, which yields $2n-2=6=ab$, so $a=2$ and $b=3$. The fact that $K$ does not contain any $a$-root of $t$ implies that the fibre over $x=0$ does not contain any contractible curve, so we get a minimal conic bundle over $K$ with at least $4$ singular fibres, which is thus  not rational by Proposition~\ref{Prop:ConicBundlesNotRational}.
\end{proof} 
\section{Automorphisms of Klein surfaces}
In this section, we determine the automorphisms of the Klein surfaces given by the equation $f=d_n,e_6,e_7$ or $e_8$ in $\C^3$. Algebraically, it corresponds to the group of $\C$-automorphisms of the ring $\C[x,y,z]/(f)$. The symmetries of the equations yield obvious automorphisms, but there is a priori no reason that some complicated non-linear automorphisms exist. We will show that this holds only in the case of $a_n$, where the group of automorphisms is an amalgamated product, a result already known.

We will embedd the affine surface $$S_f=\{(x,y,z)\in \C^3\ |\ f(x,y,z)=0\}$$ into a normal projective surface $X$, such that any point of $X\backslash S_f$ is a smooth point of $X$. Any automorphism of $S$ corresponds to a birational map $\varphi\colon X\dasharrow X$ which is biregular on the points of $S_f$. If $\varphi$ is not an automorphism of $X$, we denote by $\eta\colon Z\to X$ the blow-up of all base-points of $\varphi$ (that may lie on $X$ or being infinitely near). The map $\pi\colon Z\to X$ given by $\pi=\varphi\eta$ is a birational morphism, so we obtain the following commutative diagram

\begin{equation}\label{Eq:DiagrammFond}
\xymatrix{
& Z\ar[ld]_{\eta}\ar[rd]^{\pi}\\
X\ar@{-->}[rr]_{\varphi}&&X,
}\end{equation}
where $\eta,\pi$ are blow-ups of points of $X$ (or infinitely near) that do not belong to $S_f$.

 The nature of the boundary $B=X\backslash S_f$ will impose conditions on the diagram above. We first deal with the case $e_6,e_7,e_8$, and then study $d_n$.

\subsection{Case $e_6$, $e_7$, $e_8$}

As in $\S\ref{Sec:dPdf}$, if $f=e_6,e_7,e_8$, the affine complex surface $S_f\subset \C^3$ embedds into a projective surface $X$ contained in a weighted projective space. The surface $X$ is a singular del Pezzo surface, smooth outside $S_f$. We obtain the surface by replacing $t$ with $0$ in the surfaces $S_6,S_7,S_8$ of $\S\ref{Sec:dPdf}$, and get the following complex surfaces (for $e_6$, we  take $S_6'$ instead of $S_6$ to keep more symmetry) :

$$\mathcal{S}_6=\{(W:X:Y:Z)\in \p(1,1,1,2)_{\C}\ |\ X^4+Y^3W+Z^2=0\},$$
$$\mathcal{S}_7=\{(W:X:Y:Z)\in \p(1,1,1,2)_{\C}\ |\ X^3Y+Y^3W+Z^2=0\},$$
$$\mathcal{S}_8=\{(W:X:Y:Z)\in \p(1,1,2,3)_{\C}\ |\ X^5W+Y^3+Z^2=0\}.$$

\begin{prop}\label{Prop:ExtensionAuto}
If $f=e_i$ for $i\in\{6,7,8\}$, any automorphism of $S_f$ extends to an automorphism of $\mathcal{S}_i$.
\end{prop}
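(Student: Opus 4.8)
The plan is to show that the boundary curve $B = \mathcal{S}_i \setminus S_f$ is rigid enough that any birational self-map of $\mathcal{S}_i$ restricting to an automorphism of $S_f$ cannot have base points, hence is biregular. First I would identify $B$ explicitly: setting $W=0$ (the hyperplane at infinity of the weighted projective space, intersected with $\mathcal{S}_i$) gives the curve added to compactify $S_f$. For $e_8$, $W=0$ forces $Y^3 + Z^2 = 0$ in $\p(1,1,2,3)$ with coordinates $(0:X:Y:Z)$; for $e_7$, $W=0$ gives $X^3Y + Z^2 = 0$ in $\p(1,1,1,2)$; for $e_6$, $W=0$ gives $X^4 + Z^2 = 0$, i.e. $(Z-\im X^2)(Z+\im X^2)=0$, a union of two curves meeting at $(0:0:1:0)$. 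In each case I would compute the self-intersection and the singularities of $B$ (as a Weil divisor on the singular del Pezzo $\mathcal{S}_i$, or after passing to the minimal resolution), and locate where $B$ meets the singular locus of $\mathcal{S}_i$ — the point(s) of $\mathcal{S}_i \setminus S_f$ are smooth on $\mathcal{S}_i$ by construction, but $B$ itself may be singular there, and $B$ may also pass through the $ADE$ singularity that lies in $S_f$.

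Next, given an automorphism $\varphi$ of $S_f$, I would extend it to a birational map $\varphi\colon \mathcal{S}_i \dashrightarrow \mathcal{S}_i$ and consider the diagram~\eqref{Eq:DiagrammFond}: let $\eta\colon Z \to \mathcal{S}_i$ resolve the base points of $\varphi$ and $\pi = \varphi\eta\colon Z \to \mathcal{S}_i$. Since $\varphi$ is biregular on $S_f$, every base point of $\varphi$ (and of $\varphi^{-1}$) lies on $B$, and every curve contracted by $\eta$ or by $\pi$ maps into $B$. The key numerical input is that $\mathcal{S}_i$ has Picard rank one over $\C$ (it is a singular del Pezzo surface with a single $ADE$ singularity, geometrically rational, so $\operatorname{rk}\Pic = 1$ after resolving only... — more precisely one works on $\mathcal{S}_i$ directly where $\Pic$ is generated by $-K$). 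Then the total transform of $B$ under $\eta$ and the total transform under $\pi$ must agree as the unique effective curve class supported away from $S_f$; comparing self-intersections and using that each blow-up of a point on $B$ drops $B^2$ (or the relevant intersection number) forces $\eta$ and $\pi$ to be isomorphisms, provided $B^2$ is already as small as it can be — i.e. provided there is no room to blow up. This is where the explicit geometry of $B$ enters: one checks that $B$ is a single irreducible rational curve (or, for $e_6$, the pair of $(-1)$-curves which is contracted exactly in passing from $\mathcal{S}_6$ to the cubic) whose configuration leaves no base point available.

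The cleanest way to organize the last step is probably to argue on the minimal resolution $\widetilde{\mathcal{S}_i} \to \mathcal{S}_i$: there the surface is a weak del Pezzo, the preimage $\widetilde{B}$ of $B$ together with the $(-2)$-curves over the singularity forms an explicit negative-definite-plus-one configuration, and any base point of the induced map would lie on $\widetilde{B}$ or on the exceptional $(-2)$-curves; since blowing up a point of a weak del Pezzo of this small degree would create a curve of self-intersection $\le -2$ in the "boundary" that cannot be matched on the target side, no such point exists. I expect the \textbf{main obstacle} to be bookkeeping the case $e_6$ correctly: there $\mathcal{S}_6 = \mathcal{S}_6'$ carries two boundary $(-1)$-curves (mirroring the situation for $S_6'$ versus $S_6$ in $\S\ref{Sec:dPdf}$), and one must either work with the cubic model where $B$ becomes a single point — but then the surface is no longer smooth outside $S_f$ — or carefully show that $\varphi$ permutes the two boundary curves and still extends; checking that the two boundary components meet at a point not separable by $\varphi$ is the delicate point. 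The cases $e_7$ (double cover branched over a quartic, $B$ a single irreducible conic $X^3Y+Z^2=0$) and $e_8$ ($B$ the single curve $Y^3+Z^2=0$) should be comparatively straightforward once the Picard-rank-one argument is set up.
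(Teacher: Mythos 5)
Your framework is the right one and matches the paper's: identify the boundary $B=\mathcal{S}_i\setminus S_f$, set up diagram~(\ref{Eq:DiagrammFond}), and observe that all base points and all contracted curves are confined to $B$, so that the first curve contracted by $\pi$ is the strict transform of a component of $B$ and must be a smooth $(-1)$-curve on $Z$. The gap is in the step that is supposed to produce the contradiction. You propose a purely numerical mechanism --- ``each blow-up of a point on $B$ drops $B^2$\dots provided $B^2$ is already as small as it can be, i.e.\ provided there is no room to blow up'' --- but $B^2$ equals $2,2,1$ for $i=6,7,8$ (the boundary is anticanonical), so there is ample numerical room: blowing up three (resp.\ two) smooth points of $B$ would already bring its class to self-intersection $-1$, and no contradiction follows from intersection numbers alone. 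What actually closes the cases $e_7,e_8$ is the singularity of $B$, which you say you would compute but never feed into the argument: there $B$ is a single irreducible rational curve of arithmetic genus $1$ with one cusp of multiplicity $2$ (at $(0:0:1:0)$, resp.\ $(0:1:0:0)$). Since the strict transform of $B$ on $Z$ must be \emph{smooth} to be contractible, $\eta$ is forced to blow up the cusp, and blowing up a point of multiplicity $2$ drops the self-intersection of the strict transform by $4$, to at most $2-4=-2$ (resp.\ $1-4=-3$), which is incompatible with being a $(-1)$-curve.

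For $e_6$ you correctly identify the two components $Z=\pm\im X^2$ of $B$ meeting at $(0:0:1:0)$ and rightly flag the case as delicate, but again the decisive geometric fact is missing: the two components are $(-1)$-curves meeting at that single point with intersection multiplicity $2$ (tangentially), whence $B^2=(-1)+(-1)+2\cdot 2=2$. Because each component is already a $(-1)$-curve, $\eta$ cannot blow up any point of the component that $\pi$ contracts first, so the tangency persists on $Z$; contracting that component then turns the strict transform of the other one into a curve with a double point, and no further contractions can produce the boundary of the target $\mathcal{S}_6$, whose two components are smooth. Finally, the Picard-rank-one input you lean on is both unnecessary and false for $\mathcal{S}_6$ (its divisor class group has rank $2$, reflecting precisely the two boundary $(-1)$-curves); I would drop it and replace it by the local analysis of the singularities and tangencies of $B$ just described.
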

\begin{proof}
The projective surface $X=\mathcal{S}_i$ is a singular del Pezzo surface of degree respectively $2,2,1$. It can thus be obtained by blowing-up $7,7,8$ points respectively in $\p^2$, obtaining a weak del Pezzo and by taking the plurianti-canonical morphism, which contracts the $(-2)$-curves onto the singular point $(1:0:0:0)$. The Klein surface is the complement in $\mathcal{S}_i$ of the curve $B$ of equation $W=0$, this latter curve being equivalent to the anti-canonical divisor; in particular $B^2$ is respectively $2,2,1$. 

On $\mathcal{S}_6$, the curve $B$ is the pull-back by the double covering $\mathcal{S}_6\to \p^2$ of a quadritangent to the quartic of ramification, so $B$ is the union of two $(-1)$-curves intersecting into one point, but with multiplcity two. We can check that  $B^2=(E_1+E_2)^2=(E_1)^2+(E_2)^2+2E_1E_2=-1-1+4=2$.

On $\mathcal{S}_7$, the curve $B$ is irreducible; it is a rational curve with one cuspidal point which is $(0:0:1:0)$. Since the curve is anti-canonical, it has arithmetic genus $1$ (comes from a cubic of $\p^2$),  so the point is a simple cusp of multiplicity $2$.

On $\mathcal{S}_8$, the curve $B$ is once again a rational curve, with one cuspidal point of multiplicity $2$.

Let us suppose the existence of an automorphism of $S_f$ that extends to a  birational map $\varphi\colon X\dasharrow X$ which is not an isomorphism. We obtain the diagram (\ref{Eq:DiagrammFond}) as above. The first curve contracted by $\pi$ is the strict transform of a component of $B$ by~$\eta^{-1}$, and is a $(-1)$-curve on $Z$.

 In the case where $i=7,8$, the curve $B$ has one component, which is singular and of self-intersection $2$ or $1$. Its strict transform on $Z$ being smooth, the morphism $\eta$ blows-up the singular point of $B\subset \mathcal{S}_i$. But the self-intersection of the strict transform is then $\le -2$, which yields a contradiction.
 
 If $i=6$, the curve $B$ has two components that we write as before $E_1$ and $E_2$. We can assume that the strict transform $\widetilde{E_1}\subset Z$ of $E_1\subset X$ is the first curve contracted by $\pi$. Since $E_1$ is a $(-1)$-curve on $\mathcal{S}_6$, the morphism $\eta$ does not blow-up any point of $E_1$. In particular, the strict transform $\widetilde{E_2}\subset Z$ of $E_2\subset X$  intersects again $\widetilde{E_1}$ into a point with multiplicity $2$. After contracting $\widetilde{E_1}$, the curve $\widetilde{E_2}$ becomes singular, so we get a singular curve in the boundary of $X=\mathcal{S}_6$, which is impossible since $B$ is the union of $E_1,E_2$, both smooth.
\end{proof}
\begin{coro}\label{Coro:KleinAuto}
Any automorphism of the Klein surfaces 
$$S_{e_6}=\{(x,y,z)\in \C^3\ |\ x^4+y^3+z^2=0 \}$$
$$S_{e_7}=\{(x,y,z)\in \C^3\ |\ x^3y+y^3+z^2=0 \}$$
$$S_{e_8}=\{(x,y,z)\in \C^3\ |\ x^5+y^3+z^2=0 \}$$
extends to a diagonal automorphism of $\C^3$ of the the form $(x,y,z)\mapsto (\alpha x,\beta y,\gamma z)$.
In particular, $\Aut(S_{e_6})\cong \C^{*}\times \{\pm 1\}$, $\Aut(S_{e_7})\cong \Aut(S_{e_8})\cong \C^{*}$.
\end{coro}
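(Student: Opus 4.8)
The plan is to reduce the statement, via Proposition~\ref{Prop:ExtensionAuto}, to the determination of $\Aut(\mathcal{S}_i)$, and then to use that the pluri-anticanonical embedding is intrinsic. Indeed, an automorphism of $\mathcal{S}_i$ acts on the graded ring $\bigoplus_{m\ge0}H^0(\mathcal{S}_i,-mK)$ compatibly with multiplication; since this ring is generated in degrees $\le2$ in the cases $e_6,e_7$ and in degrees $\le3$ in the case $e_8$, and is the quotient of the coordinate ring of $\p(1,1,1,2)$, resp.\ $\p(1,1,2,3)$, by the single weighted-homogeneous equation defining $\mathcal{S}_i$, every automorphism of $\mathcal{S}_i$ is induced by a weighted-linear automorphism of the ambient space multiplying that equation by a nonzero scalar. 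The argument then amounts to writing down these linear automorphisms.

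First I would record the geometric constraints. An automorphism $\bar\varphi$ of $\mathcal{S}_i$ coming from $\Aut(S_{e_i})$ fixes the unique singular point $(1:0:0:0)$ and preserves the boundary $B=\mathcal{S}_i\setminus S_{e_i}=\{W=0\}$, hence $W\mapsto\alpha W$ for some $\alpha\in\C^{*}$. A priori the other weight-one variables ($X,Y$ for $e_6,e_7$; $X$ for $e_8$) may map to arbitrary weight-one forms, the weight-two variable may pick up a correction quadratic in the weight-one variables, and in the case $e_8$ the weight-three variable $Z$ may pick up a correction that is cubic in $W,X$ plus $Y$ times a linear form in $W,X$.

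Next I would eliminate all these corrections by a short monomial inspection, working from the top weight down. None of the three polynomials $X^4+Y^3W+Z^2$, $X^3Y+Y^3W+Z^2$, $X^5W+Y^3+Z^2$ has a monomial linear in $Z$, so the cross term $2Z\cdot(\mathrm{correction})$ appearing in the image of $Z^2$ has no counterpart, which forces $Z\mapsto\lambda Z$ with no correction. Comparing the remaining monomials (conveniently ordered by decreasing power of $Y$, so that the leading ones have no possible partner on the right) then forces successively $Y\mapsto\delta Y$ and $X\mapsto\beta X$ with no corrections. Thus $\bar\varphi$ is the diagonal map $(W:X:Y:Z)\mapsto(\alpha W:\beta X:\delta Y:\lambda Z)$, so $\varphi$ is diagonal on $\C^{3}$, and the only surviving requirement is that the three monomials of the equation be scaled equally; in the affine coordinates, writing $\varphi\colon(x,y,z)\mapsto(\mathsf{a}x,\mathsf{b}y,\mathsf{c}z)$, this reads $\mathsf{a}^4=\mathsf{b}^3=\mathsf{c}^2$ for $e_6$, $\mathsf{a}^3\mathsf{b}=\mathsf{b}^3=\mathsf{c}^2$ for $e_7$, and $\mathsf{a}^5=\mathsf{b}^3=\mathsf{c}^2$ for $e_8$.

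Finally I would identify these subgroups of the diagonal torus. For $e_8$, the homomorphism $s\mapsto(s^6,s^{10},s^{15})$ is an isomorphism $\C^{*}\to\Aut(S_{e_8})$, and for $e_7$ likewise $s\mapsto(s^4,s^6,s^9)$; injectivity holds because the relevant exponents are coprime, surjectivity because $\C^{*}$ is divisible. For $e_6$, the homomorphism $s\mapsto(s^3,s^4,s^6)$ embeds $\C^{*}$ with cokernel of order two, represented by $z\mapsto-z$ and detected by the sign of $\mathsf{c}\,s^{-6}$ (a sign to which $x^4$ and $y^3$ are blind), whence $\Aut(S_{e_6})\cong\C^{*}\times\{\pm1\}$. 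The main obstacle is the middle step: proving that every a priori admissible non-diagonal correction actually vanishes. This is a finite but slightly fiddly bookkeeping; what makes it go through is the sparseness of the three equations, each forbidden correction producing a monomial that simply cannot be cancelled by anything else.
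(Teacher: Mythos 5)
Your proposal is correct and follows essentially the same route as the paper: reduce via Proposition~\ref{Prop:ExtensionAuto} to automorphisms of $\mathcal{S}_i$, use the (pluri)anticanonical nature of the embedding to lift these to weighted-linear automorphisms of the ambient space preserving $\{W=0\}$ and fixing the singular point, kill the off-diagonal terms by inspecting the sparse equations, and identify the resulting diagonal subgroups via the explicit cocharacters $(t^3,t^4,\epsilon t^6)$, $(t^4,t^6,t^9)$, $(t^6,t^{10},t^{15})$. The monomial bookkeeping you defer does go through exactly as you predict.
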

\begin{proof}
For $i=6,7,8$, any automorphism of the affine surface 
$$S_{e_i}=\{(x,y,z)\in \C^3\ | e_i(x,y,z)=0\}$$ extends to an automorphism of the projective surface $\mathcal{S}_i$ (Proposition~\ref{Prop:ExtensionAuto}). The group of automorphisms of the affine surface is thus equal to the group of automorphisms of $\mathcal{S}_i$ that preserve the curve of equation $W=0$. The embedding in the weighted projective space being (anti)-canonical, the automorphisms of $\mathcal{S}_i$ come from automorphisms of the weighted projective space.

In the case of $i=6,7$, the surface $\mathcal{S}_i$ is in $\p(1,1,1,2)$, and any automorphism of $\p(1,1,1,2)$ that preserves $W=0$ is of the form
$$(W:X:Y:Z)\mapsto (W:aW+bX+cY:dW+eX+fY:g Z+P(W,X,Y))$$
where $a,b,c,d,e,f,g \in \C$ and $P$ is a polynomial of degree $2$ in $W,X,Y$. The form of the equation being $Z^2+Q(W,X,Y)$, we directly see that $P=0$. The singular point $(1:0:0:0)$ being fixed, we find $c=f=0$. We can then either check directly, or use the singularity to remove coefficients, that the automorphism is diagonal, i.e.\ of the form 
$$(W:X:Y:Z)\mapsto (W:bX:fY:g Z).$$
The same argument apply to case $i=8$. 

The group $\Aut(S_{e_6})$, $\Aut(S_{e_7})$, $\Aut(S_{e_8})$ corresponds thus to automorphisms of the form $(x,y,z)\mapsto (\alpha x,\beta y,\gamma z)$ which preserve the equation. These are subgroups of $(\C^{*})^3$ given respectively by 
$$\Aut(S_{e_6})\cong \{(\alpha,\beta,\gamma)\in (\C^{*})^{3}\ |\ \alpha^4=\beta^3=\gamma^2\},$$
$$\Aut(S_{e_7})\cong \{(\alpha,\beta,\gamma)\in (\C^{*})^{3}\ |\ \alpha^3\beta=\beta^3=\gamma^2\},$$
$$\Aut(S_{e_8})\cong \{(\alpha,\beta,\gamma)\in (\C^{*})^{3}\ |\ \alpha^5=\beta^3=\gamma^2\}.$$
These are thus isomorphic respectively to $\C^{*}\times \{\pm 1\}$, $\C^{*}$, $\C^{*}$ via the maps
$$(t,\epsilon)\mapsto (t^3,t^4,\epsilon t^6), t\mapsto (t^4,t^6,t^9), t\mapsto (t^6,t^{10},t^{15}).$$
\end{proof}

\subsection{Case $d_n$}
In the case of $d_n$, the projective normal surface $X$ will be an hypersurface of a $\p^2$-bundle over $\p^1$. 

Using the notation of \cite{bib:Freu}, the $\p^2$-bundle $F_{a,b}$ is $\mathbb{P}(\mathcal{O}_{\p^1}\oplus\mathcal{O}_{\p^1}(a)\oplus\mathcal{O}_{\p^1}(b))$, and can be viewed as the glueing of $U_{a,b,0}=\p^2\times \C$ and $U_{a,b,\infty}=\p^2\times \C$ along $\p^2\times \C^{*}$, where the identification map is given by the involution $$((w:y:z),x) \dasharrow ((w:x^{-a}y:x^{-b}z),\frac{1}{x}).$$ It is a generalisation of the construction of Hirzebruch surfaces. The $\p^2$-bundle is given by the map $ F_{a,b}\to \p^1$ corresponding to $((w:y:z),x)\to (x:1)$ in the first map and $((w:y:z),x)\to (1:x)$ in the second one.

If $n=2k$, we take the $\p^2$-bundle $F_{k,k}=\mathbb{P}(\mathcal{O}_{\p^1}\oplus\mathcal{O}_{\p^1}(k-1)\oplus\mathcal{O}_{\p^1}(k-1))$, and denote by $\mathcal{D}_n$ the projective surface that restrict to the following surfaces on each chart:

$$\begin{array}{llcll}
\{&((w:y:z),x)\in U_{k-1,k-1,0} &|& x^{n-1}w^2+xy^2+z^2&\}\\
\{&((w:y:z),x)\in U_{k-1,k-1,\infty }& |& w^2+y^2+xz^2&\}\end{array}$$

If $n=2k+1$, we take the $\p^2$-bundle $F_{k-1,k}=\mathbb{P}(\mathcal{O}_{\p^1}\oplus\mathcal{O}_{\p^1}(k-1)\oplus\mathcal{O}_{\p^1}(k))$, and denote by $\mathcal{D}_n$  the projective surface that restrict to the following surfaces on each chart:

$$\begin{array}{llcll}
\{&((w:y:z),x)\in U_{k-1,k,0} &|& x^{n-1}w^2+xy^2+z^2&\}\\
\{&((w:y:z),x)\in U_{k-1,k,\infty }& |& w^2+xy^2+z^2&\}\end{array}$$

In each case, we embedd the surface $S_{d_n}$ in the first affine chart of $\mathcal{D}_n$, via the embedding $$(x,y,z)\mapsto ((1:y:z),x)$$ of $\C^3$ into $\p^2\times \C$, and see that $(0,0,0)$ is sent onto the unique singular point of $\mathcal{D}_n$. 
The $\p^2$-bundle $F_{k,k}\to \p^1$ or $F_{k-1,k}\to \p^1$ restricts to a morphism $\rho\colon \mathcal{D}_n\to \p^1$, which has general fibres isomorphic to a smooth conic (or to $\p^1$), and which has two special fibres, namely the fibre of $(1:0)$ which is the union of two transversal lines, and the fibre of $(0:1)$, which is a double line passing through the  singular point of $\mathcal{D}_n$.
The complement of $\mathcal{D}_n\backslash S_{d_n}$ consists of the curve $C_n$ of equation $w=0$ in each chart, and of the curve $F$ of equation $x=0$ in the second chart, corresponding to the fibre of $(1:0)$. The curve $F$ decomposes into two curves $F_+,F_{-}$ of equation $x=0,w=\pm\im z$ if $n$ is even and $x=0,w=\pm \im y$ if $n$ is odd.

The precise description of the boundary $\mathcal{D}_n\backslash S_{d_n}=C_n\cup F_{+}\cup F_{-}$, given in Lemma~\ref{Lemm:DnBoundary} below, will yield the structure of the automorphisms of $S_{d_n}$.
\begin{lemm}\label{Lemm:DnBoundary}
The complement of $S_{d_n}$ in $\mathcal{D}_n$ is the union of the three curves $C_n$, $F_{+}$, $F_{-}$, all being isomorphic to $\p^1$. Any two of them intersect transversally, into exactly one point, which is $C_n\cap F_{+}\cap F_{-}$. Moreover, $(C_n)^2=3-n$.

\begin{center}\begin{pspicture}(-1,-1)(7,1)
\psline[linecolor=gray](2.97,-1)(2.97,1)
\psline[linecolor=gray](3.03,-1)(3.03,1)
\psline[linecolor=gray](1.5,-1)(1.5,1)
\psline[linecolor=gray](2,-1)(2,1)
\psline[linecolor=gray](2.5,-1)(2.5,1)
\psline[linecolor=gray](1,-1)(1,1)
  \pscurve[linewidth=0.05](2.7,0)(0,-0.3)(5,0)(6,-0.3)
\psline[linewidth=0.05](4,-1)(6,1)
\psline[linewidth=0.05](4,1)(6,-1)
\rput(3,1){\textcolor{gray}{\textbullet}}
\rput(-0.3,-0.2){$C_n$}
\rput(-0.4,-0.5){\scriptsize $[3-n]$}
\rput(4,0.75){$F_{+}$}
\rput(4,0.4){\scriptsize $[-1]$}
\rput(6.2,0.75){$F_{-}$}
\rput(6.2,0.4){\scriptsize $[-1]$}
\end{pspicture}\\
{\it \scriptsize The situation on the surface $\mathcal{D}_n$.\\ Curves in bold are the boundary $\mathcal{D}_n\backslash S_{d_n}$ and curves in grey are fibres.}\end{center}\end{lemm}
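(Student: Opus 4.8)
The plan is to argue directly from the two affine charts defining $\mathcal{D}_n$. Since $S_{d_n}$ sits inside the first chart as the locus $\{w\neq 0\}$, a point of $\mathcal{D}_n$ lies outside $S_{d_n}$ exactly when it is in the first chart with $w=0$, or it lies in the fibre $F=\rho^{-1}((1:0))$, the only fibre of $\rho$ not meeting the first chart. The first locus is the curve $C_n=\{w=0\}$; reading off the second chart, $F$ is the plane conic $\{x=0,\ w^2+y^2=0\}$ for $n$ even and $\{x=0,\ w^2+z^2=0\}$ for $n$ odd, a union of two distinct lines $F_+\cup F_-$. As $C_n$ maps $2{:}1$ onto $\mathbb{P}^1$ under $\rho$ while $F_+,F_-$ lie in a single fibre, the curves $C_n,F_+,F_-$ are pairwise distinct and irreducible, and together they exhaust $\mathcal{D}_n\setminus S_{d_n}$.

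Next I would check that each component is $\cong\mathbb{P}^1$. The curves $F_+,F_-$ are lines in the $\mathbb{P}^2$-fibre over $(1:0)$. For $C_n$: in the first chart it is the smooth curve $\{w=0,\ xy^2+z^2=0\}$, carrying the open dense parametrisation $\mathbb{A}^1\to C_n$, $z\mapsto\bigl((0{:}1{:}z),-z^2\bigr)$; adjoining the one remaining point over $x=\infty$ (smooth on $C_n$, visible in the second chart) gives a smooth complete rational curve. I would also record that $C_n,F_+,F_-$ all avoid the unique singular point $\bigl((1{:}0{:}0),0\bigr)$ of $\mathcal{D}_n$, so that ordinary intersection theory and adjunction are available along the whole boundary.

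For the incidences, $F_+\cap F_-$ is the unique singular point $p$ of the fibre $F$, and $C_n$ meets $F$ only at $p$ (it passes through the point over $x=\infty$, which equals $p$, and $C_n\to\mathbb{P}^1$ is ramified there, by $z\mapsto -z^2$); thus every pairwise intersection reduces to the single point $p=C_n\cap F_+\cap F_-$. Transversality is a one-line local check: in the affine chart $z=1$ of $\mathbb{P}^2$ near $p$ (for $n$ even; take $y=1$ for $n$ odd), the second-chart equation of $\mathcal{D}_n$ becomes $w^2+y^2+x=0$, so $\mathcal{D}_n$ is the smooth graph $x=-(w^2+y^2)$ with local coordinates $(w,y)$, on which $C_n=\{w=0\}$ and $F_\pm=\{w=\pm\im y\}$ are three distinct lines through $p$; hence each pair meets only at $p$ and with intersection multiplicity one (the odd case is identical, with $z$ in place of $y$).

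The one genuinely computational point is $(C_n)^2=3-n$, which I would obtain inside the ambient $\mathbb{P}^2$-bundle $F_{a,b}$, observing that $a+b=n-2$ in every case ($(a,b)=(k-1,k-1)$ if $n=2k$, and $(a,b)=(k-1,k)$ if $n=2k+1$). Writing $h$ for the class of $\{w=0\}$ and $\phi$ for the class of a fibre, one has $\mathrm{Pic}(F_{a,b})=\mathbb{Z}h\oplus\mathbb{Z}\phi$ with threefold intersection numbers $h^2\cdot\phi=1$, $\phi^2=0$ and $h^3=-(a+b)$. Restricting $\mathcal{D}_n$ to the section $\{y=z=0\}$ (where it cuts out the point over $x=0$ with multiplicity $n-1$) and using that $\mathcal{D}_n$ is a conic in each fibre gives $\mathcal{D}_n\sim 2h+(n-1)\phi$; and since $C_n=\{w=0\}\cap\mathcal{D}_n$ transversally in $F_{a,b}$, one gets $(C_n)^2_{\mathcal{D}_n}=h^2\cdot\mathcal{D}_n=2h^3+(n-1)\,h^2\cdot\phi=-2(n-2)+(n-1)=3-n$. (The same value also comes out of adjunction along $C_n\cong\mathbb{P}^1$, using $K_{\mathcal{D}_n}=(K_{F_{a,b}}+\mathcal{D}_n)|_{\mathcal{D}_n}$.) I expect the only real difficulty to be pinning down the signs of these threefold intersection numbers on $F_{a,b}$; the remaining verifications are routine bookkeeping with the explicit affine equations.
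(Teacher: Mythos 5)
Your proof is correct, and all the claims you make check out against the explicit charts: the boundary is exhausted by $C_n\cup F_+\cup F_-$, the three components are smooth rational curves missing the singular point $((1{:}0{:}0),0)$, and your local model $x=-(w^2+y^2)$ near the common point exhibits the three components as the distinct lines $w=0$, $w=\pm\im y$, giving the pairwise transversal intersection at a single point. (Incidentally, your chart computation of $F_\pm$ — $w^2+y^2=0$ for $n$ even, $w^2+z^2=0$ for $n$ odd — is the right one; the paper's prose has the two cases swapped.) Where you genuinely diverge from the paper is the one nontrivial item, $(C_n)^2=3-n$. The paper stays on the surface $\mathcal{D}_n$: it introduces the auxiliary curve $D_n=\{y=0\}$, computes $C_n\cdot D_n=2k+1-n$ directly, and uses the rational function $wx^{k-1}/y$, whose divisor is $C_n-D_n+(k-1)F_0$, to convert this into $(C_n)^2=C_n\cdot(D_n-(k-1)F)=3-n$ via $C_n\cdot F=2$. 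You instead compute in the Chow ring of the ambient $\p^2$-bundle: the relation $h^3=-(a+b)=-(n-2)$ follows from $h\cdot(h+a\phi)\cdot(h+b\phi)=0$ (the three coordinate divisors have empty intersection, and the transition functions give $\{y=0\}\sim h+a\phi$, $\{z=0\}\sim h+b\phi$), the class $[\mathcal{D}_n]=2h+(n-1)\phi$ is read off from the fibrewise degree together with the degree $n-1$ on the section $\{y=z=0\}$ (which is disjoint from $\{w=0\}$), and then $(C_n)^2=h^2\cdot[\mathcal{D}_n]=-2(n-2)+(n-1)=3-n$. Both routes are legitimate: the paper's needs no ambient intersection theory but requires producing the right rational function, while yours is more systematic and also yields $[\mathcal{D}_n]$ and $K_{\mathcal{D}_n}$ as by-products, at the modest cost of pinning down the sign of $h^3$ and noting that $\{w=0\}$ meets $\mathcal{D}_n$ generically transversally so that $C_n$ really is the restriction of $h$.
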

\begin{proof}
The only assertion which does not directly follow from the description above is  the self-intersection of the curve $C_n$.

As before, we choose $k$ so that $n\in \{2k,2k+1\}$. Let $D_n\subset \mathcal{D}_n$ be the curve given by $y=0$ on each chart. If $n=2k$, the two curves $D_n$ and $C_n$ intersect at one point, being $((0:0:1),0)\in U_{k-1,k_1,\infty}$, with distinct tangent directions, so $C_n\cdot D_n=1$; if $n=2k+1$ the two curves are disjoint so $D_n\cdot C_n=0$. In both cases, we can say that $C_n\cdot D_n=2k+1-n$. We use the rational map $g\in \C(\mathcal{D}_n)^{*}$ given by $w/y$ on the second chart and thus by  $w/(yx^{1-k})=wx^{k-1}/y$ on the first chart. The principal divisor associated is $C_n-D_n+(k-1)F_0$, where $F_0$ is the fibre of $(0:1)$, linearly equivalent to $F$, the fibre of $(1:0)$. Since $C_n\cdot F=2$, we can compute $$(C_n)^2=C_n\cdot (D_n-(k-1)F)=(2k+1-n)-(k-1) (D_n \cdot F)=3-n.$$
\end{proof}

\begin{prop}\label{Prop:ExtensionAutoDn}
For any $n\ge 4$, any automorphism of $S_{d_n}$ extends to an automorphism of $\mathcal{D}_{n}$.
\end{prop}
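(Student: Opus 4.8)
The strategy is identical in spirit to Proposition~\ref{Prop:ExtensionAuto}: suppose an automorphism of $S_{d_n}$ extends to a birational self-map $\varphi\colon \mathcal{D}_n\dasharrow \mathcal{D}_n$ which is \emph{not} biregular, resolve it by a diagram of the shape~\eqref{Eq:DiagrammFond}
\[
\xymatrix{
& Z\ar[ld]_{\eta}\ar[rd]^{\pi}\\
\mathcal{D}_n\ar@{-->}[rr]_{\varphi}&&\mathcal{D}_n,
}
\]
where $\eta$ and $\pi$ are blow-ups of points lying only on the boundary $B=\mathcal{D}_n\setminus S_{d_n}=C_n\cup F_+\cup F_-$, and derive a contradiction from the very rigid intersection configuration described in Lemma~\ref{Lemm:DnBoundary}. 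The point is that the strict transform $\widetilde{B}\subset Z$ of $B$ has the same combinatorial type as $B$ except that $\eta$ (resp.\ $\pi$) may have split some triple or higher contacts; in particular the last curve contracted by $\pi$ must be the $\eta^{-1}$-transform of one of $C_n,F_+,F_-$, and after contracting it we are back to a surface with the \emph{same} boundary type, so the configuration is preserved throughout.

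\textbf{Key steps.} First I would record that $F_+,F_-$ are $(-1)$-curves while $(C_n)^2=3-n\le -1$, and that the three curves pass through the single common point $p=C_n\cap F_+\cap F_-$, pairwise transversally. Second, I would show $\eta$ does not blow up any point of $F_+$ or $F_-$ away from $p$: blowing up a free point of a $(-1)$-curve in the boundary would drop its self-intersection to $\le -2$, and after the matching contraction by $\pi$ one would obtain a boundary component of self-intersection $\le -2$ on $\mathcal{D}_n$, contradicting that $F_\pm$ are $(-1)$-curves; a blow-up of a free point of $C_n$ is excluded symmetrically by looking at $\pi$. Third, I would analyse the triple point $p$: if $\eta$ blows up $p$, then on the blow-up the three strict transforms of $C_n,F_+,F_-$ become pairwise disjoint and each meets the exceptional curve $E$ once, so $E$ is a fourth boundary component meeting three others — a configuration that cannot be produced back on $\mathcal{D}_n$ by the contraction $\pi$ (which can only contract one $(-1)$-curve at a time and never increases the number of boundary components beyond three with this incidence pattern). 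Fourth, since $\eta$ blows up neither a free boundary point nor $p$, and its base points lie on $B$, $\eta$ must be an isomorphism; symmetrically $\pi$ is an isomorphism, so $\varphi$ is biregular, contradicting the hypothesis. Hence every automorphism of $S_{d_n}$ is already an automorphism of $\mathcal{D}_n$.

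\textbf{Main obstacle.} The delicate part is the bookkeeping at the triple point $p$ and at infinitely near points: one must check that $\eta$ cannot start by blowing up $p$ and then a point infinitely near to $p$ lying on one of the three strict transforms, nor a point on the exceptional divisor, in a way that could eventually be "undone" by $\pi$ while keeping the boundary type. This is handled by tracking, for each candidate first blow-up, the self-intersections and the number of boundary components and noting that the needed final surface must again have boundary $C_n\cup F_+\cup F_-$ with $(C_n)^2=3-n$, $(F_\pm)^2=-1$, three curves through one triple point; any nontrivial $\eta$ forces a numerical discrepancy (either a component of self-intersection $\le-2$, or four boundary components, or the triple point resolved), none of which the contraction $\pi$ can repair. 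One should also use that $n\ge 4$ so $(C_n)^2=3-n\le -1$, which rules out $C_n$ itself being the last $(-1)$-curve contracted by $\pi$ unless $\eta$ first blew up points on it, looping back to the excluded cases.
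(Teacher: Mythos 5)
Your overall strategy --- resolve $\varphi$ as in (\ref{Eq:DiagrammFond}) and derive a contradiction from the rigidity of the boundary configuration of Lemma~\ref{Lemm:DnBoundary} --- is the same as the paper's, but two of your key steps have genuine gaps. First, your exclusion of blow-ups at free points of $F_{+}$ or $F_{-}$ does not work as stated: if $\eta$ blows up a point of $F_{+}$, the strict transform $\widetilde{F_{+}}\subset Z$ indeed has self-intersection $\le -2$, but $\pi$ only \emph{contracts} curves, and each contraction of a curve meeting $\widetilde{F_{+}}$ \emph{raises} its self-intersection; moreover $\widetilde{F_{+}}$ might itself be contracted by $\pi$ at a later stage. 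So you cannot conclude that the target would acquire a boundary component of self-intersection $\le -2$. Second, your treatment of the triple point $p$ ends with the unproved assertion that the four-component configuration ``cannot be produced back'' by $\pi$; this is precisely the non-trivial point, since $\pi$ does contract $k$ of the $3+k$ components of $\eta^{-1}(B)$ and one must explain why the three survivors cannot again form the configuration of Lemma~\ref{Lemm:DnBoundary}. You flag the bookkeeping of infinitely near points as the ``main obstacle'' but do not carry it out, so the proposal is a plan rather than a proof.

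The paper's proof avoids both difficulties by arguing on the side of $\pi$ instead of trying to show outright that $\eta$ is an isomorphism: the \emph{first} curve contracted by $\pi$ is necessarily the strict transform under $\eta^{-1}$ of a component of the boundary (not an exceptional curve of $\eta$, by minimality of the resolution) and is a $(-1)$-curve on $Z$. Since $(C_n)^2=3-n\le -1$ and $(F_{\pm})^2=-1$, this curve is, say, $\widetilde{F_{+}}$, and the equality of the self-intersections on $X$ and on $Z$ forces $\eta$ to blow up no point of $F_{+}$ --- in particular not the triple point $p=C_n\cap F_{+}\cap F_{-}$. Hence $\widetilde{C_n}$ and $\widetilde{F_{-}}$ both still pass through the point of $\widetilde{F_{+}}$ lying over $p$, and contracting $\widetilde{F_{+}}$ makes them tangent there; this tangency (or the singular boundary component it would create under a further contraction) cannot be removed by subsequent contractions, contradicting the fact that the target boundary consists of three smooth curves meeting pairwise transversally. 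Reorganizing your argument around this single first contraction would close the gaps.
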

\begin{proof}
Let us suppose the existence of an automorphism of $S_{d_n}$ that extends to a  birational map $\varphi\colon \mathcal{D}_n\dasharrow \mathcal{D}_n$ which is not an isomorphism. 

We obtain the diagram (\ref{Eq:DiagrammFond}), with some birational morphisms $\pi\colon Z\to X=\mathcal{D}_n$ and $\eta\colon Z\to X=\mathcal{D}_n$ such that $\pi\eta^{-1}=\varphi$.

The first curve contracted by $\pi$ is the strict transform  by~$\eta^{-1}$ of a component of the boundary $B$ and is a $(-1)$-curve on $Z$. The boundary $B$ is equal to $C\cup F_{+},F_{-}$, which have self-intersection $3-n$, $-1$, $-1$ respectively (Lemma~\ref{Lemm:DnBoundary}). In particular, the first curve contracted by $\pi$ is the strict transform of $F_{+}$ or $F_{-}$, say $F_{+}$. Since $F_{+}$ has self-intersection $-1$ on $X$ and its strict transform on $Z$ also, the point of intersection of $F_{+},F_{-1}$ and $C$ is not blown-up by $\eta$. After contracting the strict transform of $F_{+}$,  the strict transforms of $C_n$ and $F_{-}$ become tangent and it is not possible to obtain the boundary $F_{+},F_{-},C_n$ after other contractions.
\end{proof}

\begin{coro}\label{Coro:KleinAutoDn}
For $n\ge 4$, the group 
$$G=\{(x,y,z)\mapsto (\lambda^2 x,\pm \lambda^{n-2}y,\pm \lambda^{n-1}z)\ |\ \lambda\in \C^{*}\}\cong \C^{*}\times \Z{2}$$
acts on the Klein surface
$$S_{d_n}=\{(x,y,z)\in \C^3\ |\ x^{n-1}+xy^2+z^2 \}.$$

If $n\ge 5$, $\Aut(S_{d_n})=G$.

If $n=4$, $\Aut(S_{d_n})=G\rtimes <\tau>$, where $\tau$ is the automorphism of order $3$ given by 

$$\tau\colon (x,y,z)\mapsto \left(-\frac{1}{2}x+\frac{\im}{2}y,\frac{3}{2}\im x-\frac{1}{2} y,z\right).$$
\end{coro}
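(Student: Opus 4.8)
The plan is to pass to the projective model $\mathcal{D}_n$, which carries the conic bundle $\rho\colon\mathcal{D}_n\to\p^1$, and to exploit the rigidity of that fibration together with the description of the boundary in Lemma~\ref{Lemm:DnBoundary}. First I would check the easy inclusion: a substitution shows that $(x,y,z)\mapsto(\lambda^{2}x,\pm\lambda^{n-2}y,\pm\lambda^{n-1}z)$ multiplies $d_n=x^{n-1}+xy^{2}+z^{2}$ by $\lambda^{2(n-1)}$, so these maps preserve $S_{d_n}$, and comparing the two signs with the element $\lambda=-1$ identifies the group they form with $\C^{*}\times\Z{2}$. For $n=4$ I would check separately that $\tau$ fixes $z$ and carries $x(x^{2}+y^{2})=x(x-\im y)(x+\im y)$ to itself (it permutes the three linear forms $x,\ x-\im y,\ x+\im y$ up to scalars whose product is $1$), hence preserves $d_{4}$; since its linear part on $(x,y)$ has trace $-1$ and determinant $1$, it has order $3$.

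For the converse, by Proposition~\ref{Prop:ExtensionAutoDn} every automorphism of $S_{d_n}$ extends to an automorphism $\varphi$ of $\mathcal{D}_n$ preserving the boundary $B=C_n\cup F_{+}\cup F_{-}$, and by Lemma~\ref{Lemm:DnBoundary} it permutes the three curves $C_n,F_{+},F_{-}$. The core of the argument is the claim: \emph{if $\varphi(C_n)=C_n$, then $\varphi\in G$.} To prove it, note that $\varphi$ then permutes $\{F_{+},F_{-}\}$, so it fixes the class $[F_{+}+F_{-}]\in\Pic{\mathcal{D}_n}$, which is the fibre class of $\rho$; hence $\varphi$ sends fibres of $\rho$ to fibres and induces $\bar\varphi\in\Aut(\p^{1})$. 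The two degenerate fibres of $\rho$ (the union $F_{+}+F_{-}$ of two lines, and the non-reduced line through the unique singular point of $\mathcal{D}_n$) are of different kinds and cannot be exchanged, so $\bar\varphi$ fixes both corresponding points of $\p^{1}$, i.e.\ $\bar\varphi$ is $x\mapsto cx$. Composing $\varphi$ with the element of $G$ whose action on the base is $x\mapsto c^{-1}x$ (it exists once a square root of $c$ is chosen), I may assume $\varphi$ induces the identity on the base. Then for each $x_0\ne0$ it restricts to a polynomial automorphism of the smooth affine conic $\{x_0y^{2}+z^{2}=-x_0^{n-1}\}$, which is necessarily an element of the orthogonal group of the quadratic form $x_0y^{2}+z^{2}$, hence linear and homogeneous in $(y,z)$. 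So $\varphi(x,y,z)=(x,\ a(x)y+b(x)z,\ d(x)y+e(x)z)$ with $a,b,d,e\in\C[x]$, and the requirement that $x^{n-1}+x(ay+bz)^{2}+(dy+ez)^{2}$ be divisible by $d_n$ in $\C[x,y,z]$ gives, by an elementary degree count, $a=\pm1$, $e=\pm1$, $b=d=0$. Thus $\varphi$ differs from the identity by a sign on $y$ and on $z$, and the original automorphism lies in $G$.

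Granting the claim, the two cases are immediate. If $n\ge5$ then $(C_n)^{2}=3-n\le-2$ while $(F_{\pm})^{2}=-1$ (Lemma~\ref{Lemm:DnBoundary}), so $C_n$ is the only component of $B$ that is not a $(-1)$-curve; every $\varphi$ fixes it, and $\Aut(S_{d_n})=G$, with all automorphisms diagonal. If $n=4$ the three components of $B$ are all $(-1)$-curves, so $\varphi$ may a priori permute them; but $\tau\notin G$ (its first coordinate involves $y$), so the claim forces $\tau(C_{4})\ne C_{4}$, and as $\tau$ has order $3$ it cycles $C_{4},F_{+},F_{-}$. For any $\varphi$, either $\varphi(C_{4})=C_{4}$ and $\varphi\in G$, or one of $\tau^{-1}\varphi,\tau^{-2}\varphi$ fixes $C_{4}$ and so lies in $G$; therefore $\Aut(S_{d_{4}})=G\cup G\tau\cup G\tau^{2}$. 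Checking that the diagonal $\C^{*}\subset G$ commutes with $\tau$ and that $(x,y,z)\mapsto(x,-y,z)$ conjugates $\tau$ to $\tau^{-1}$ then identifies this with the asserted extension of $G$ by $\langle\tau\rangle$, which is isomorphic to $\C^{*}\rtimes\Sym_{3}$.

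The step I expect to be the main obstacle is the middle of the claim: after reducing to an automorphism that preserves every fibre of $\rho$, one must be sure that it acts on each smooth fibre linearly in $(y,z)$ — equivalently, that it has no poles along $C_n$ and no higher‑degree behaviour — which is exactly the statement that a polynomial automorphism of a smooth affine plane conic is linear, i.e.\ lies in the orthogonal group of the quadratic form defining the conic. Everything around it (the substitution of Step~1, the Picard‑class bookkeeping, and the combinatorics of the triangle $C_n\cup F_{+}\cup F_{-}$) is routine by comparison.
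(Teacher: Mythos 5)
Your proposal is correct and follows essentially the same route as the paper: extend automorphisms to $\mathcal{D}_n$ via Proposition~\ref{Prop:ExtensionAutoDn}, use the boundary description of Lemma~\ref{Lemm:DnBoundary} to reduce to fibre-preserving maps, trivialize the action on the base by composing with an element of $G$, and then identify the fibrewise automorphisms by a coefficient/degree count, treating $n=4$ separately via the cyclic permutation of the three boundary components by $\tau$. The step you flag as the main obstacle (automorphisms of a smooth affine conic being linear, hence in the orthogonal group) is exactly the paper's corresponding step, phrased there for the projective fibres, and your degree-count elimination of $b,d$ and reduction of $a,e$ to signs matches the paper's computation.
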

\begin{proof}

For $n\ge 4$, any automorphism of the affine surface $S_{d_n}$ extends to an automorphism of the projective surface $\mathcal{D}_n$ (Proposition~\ref{Prop:ExtensionAutoDn}), which will therefore preserves the boundary $\mathcal{D}_n\backslash S_{d_n}=F_{+}\cup F_{-}\cup D_n$. 
Denote by $G\subset \Aut(\mathcal{D}_n)$ the normal subgroup of automorphisms that preserves the conic bundle structure $\mathcal{D}_n\to \p^1$, i.e.\ that sends any fibre on another fibre, and by $G_0\subset G$ the subgroup of automorphisms which acts trivially on the basis, i.e.\ which leaves any fibre invariant.

We first prove that $G_0\cong (\Z{2})^2$, and is generated by $\sigma_y\colon (x,y,z)\mapsto (x,-y,z)$ and $\sigma_z\colon (x,y,-z)$. Let $g$ be an element of $G_0$. For any $x_0\in \C$,  $g\in G_0$ restricts to an automorphism of the fibre of $(x_0:1)$ (points where $x=x_0$). This curve being a smooth conic for $x_0\not=0$, the automorphism extends to a linear automorphism of the plane that contains it, which preserves the two points of $C_n$ and thus the line of equation $w=0$ which contains these. 
In particular we can write $g$ as
$$(x,y,z)\mapsto (x,ay+bz+e,cy+dz+f),$$
where $a,b,c,d,e,f\in \C[x]$ and $ad-bc\not=0$, and for any $x\not=0$, the automorphism is affine and preserves the equation of the conic, so

\begin{equation}x^{n-1}+x(ay+bz+e)^2+(cy+dz+f)^2=\lambda(x^{n-1}+xy^2+z^2)\label{Eqxy}\end{equation}
 for some $\lambda\in \C[x]^{*}$. Computing the coefficients of $wy$ and $wz$ we find $2xae+2cf=0$ and $2xbe+2df=0$. Subtracting $b$ times the first one from $a$ times the second one, we find $2(bc-ad)f=0$, which implies that $f=0$. It implies that $2xae=0$ and $2xbe=0$, which yields $e=0$. 
 Comparing the constant terms of Equation~(\ref{Eqxy}) we find $x^{n-1}=\lambda x^{n-1}$, hence $\lambda=1$. The coefficients of $y^2$ and $z^2$ yield respectively:
 \begin{eqnarray}
xa^2+c^2-x&=&0\label{eqx2}\\
xb^2+d^2-1&=&0\label{eqx3}\end{eqnarray}
Equation $(\ref{eqx2})$ implies that $a,c$ are constant polynomials (otherwise the highest degree of $xa^2+c^2$ would not vanish). It yields thus $a^2=1$ and $c=0$. Equation $(\ref{eqx3})$ gives similarly $b=0$ and $d^2=1$. This implies that $G_0= \{\mathrm{id},\sigma_x,\sigma_y,\sigma_x\sigma_y\}$.

Any element of  $G$ preserves the conic bundle and preserves the two special fibres, which are the fibres of $(0:1)$ and $(1:0)$. The action on the basis is thus given by $x\mapsto \alpha x$ for some $\alpha \in \C^{*}$. Multiplying the automorphism by
$$(x,y,z)\mapsto (\lambda^2 x, \lambda^{n-2}y,\lambda^{n-1}z)$$
where $\lambda^2=\alpha^{-1}$, we get an element of $G_0$, so $$G=\{(x,y,z)\mapsto (\lambda^2 x,\pm \lambda^{n-2}y,\pm \lambda^{n-1}z)\ |\ \lambda\in \C^{*}\}\cong \C^{*}\times \Z{2}.$$

Since $F=F_{+}\cup F_{-}$ is a fibre, and any automorphism preserves the boundary $F_{+}\cup F_{-}\cup D_{n}$, the group $G$ is the group of automorphisms that preserve $F$, or equivalently that preserve $D_n$. 

The self-intersections of the curves $F_{+},F_{-},D_n$ being respectively $-1,-1,3-n$, we see that $G=\Aut(\mathcal{D}_n)$ for $n>4$. 
For $n=4$, one directly checks that $\tau\colon (x,y,z)\mapsto (-\frac{1}{2}x+\frac{\im}{2}y,\frac{3}{2}\im x-\frac{1}{2} y,z)$ is an automorphism of  order $3$ of $S_{d_n}$. Since it does not preserves the conic bundle, it extends to an automorphism which permutes cyclically the curves $F_{+},F_{-},D_{4}$ of the boundary. This implies that $\Aut(S_{d_4})=G\rtimes<\tau>$.
\end{proof}

\end{document}